\documentclass[11pt,a4paper]{amsart} 
\usepackage{amssymb,amscd,amsmath, young,mathrsfs}
\usepackage{mathrsfs, mathdots} 
\usepackage{float}
\usepackage[usenames]{color}
\usepackage{soul}
\usepackage{diagrams}
\usepackage{array,longtable}
\newcolumntype{C}{>{$}c<{$}}
\newcolumntype{R}{>{$}r<{$}}
\newcolumntype{L}{>{$}l<{$}}

\addtolength{\textwidth}{2cm}

\addtolength{\hoffset}{-1cm}

\linespread{1.1}

\theoremstyle{plain}
\newtheorem{thm}{Theorem}[section]
\newtheorem{lem}[thm]{Lemma}
\newtheorem{pro}[thm]{Proposition}

\newtheorem{lemma}[thm]{Lemma}

\newtheorem{asm}[thm]{Assumption}

\theoremstyle{remark}
\newtheorem{rem}[thm]{Remark}

\newtheorem{exm}[thm]{Example}

\newtheorem{dfn}[thm]{Definition}
\newtheorem*{question*}{Question}

\newtheorem*{acknowledgements}{Acknowledgements}


\newcommand{\N}{\mathbb{N}}
\newcommand{\Z}{\mathbb{Z}}
\newcommand{\Q}{\mathbb{Q}}
\newcommand{\F}{\mathbb{F}}

\newcommand{\tensor}{\otimes}

\renewcommand{\epsilon}{\varepsilon}

\renewcommand{\phi}{\varphi}

\DeclareMathOperator{\GL}{GL}

\DeclareMathOperator{\ind}{ind}
\DeclareMathOperator{\Ind}{Ind}

\author{Michael M.~Schein} \address{Department of Mathematics,
  Bar-Ilan University, Ramat Gan 5290002,
  Israel}\email{mschein@math.biu.ac.il}
\date{\today}

 \begin{document}
 \title[Supersingular representations of $\mathrm{GL}_2(F)$]{A family of irreducible supersingular representations of $\mathrm{GL}_2(F)$ for some ramified $p$-adic fields} 

\begin{abstract}
We construct infinite families of irreducible supersingular mod $p$ representations of $\mathrm{GL}_2(F)$ with $\mathrm{GL}_2(\mathcal{O}_F)$-socle compatible with Serre's modularity conjecture, where $F / \Q_p$ is any finite extension with residue field $\mathbb{F}_{p^2}$ and ramification degree $e \leq (p-1)/2$.  These are the first such examples for ramified $F / \Q_p$.  
\end{abstract}

\maketitle

\section{Introduction}
\subsection{Supersingular representations}
The irreducible smooth mod $p$ representations of $\mathrm{GL}_2(F)$ admitting a central character, where $F / \Q_p$ is a finite extension, were classified by Barthel and Livn\'{e}~\cite[Theorems~33-34]{BL/94}, except for the {\emph{supersingular}} representations.  The irreducible supersingular representations of $\mathrm{GL}_2(\Q_p)$ were determined by Breuil~\cite[Th\'{e}or\`{e}me~1.1]{Breuil/03}.  By contrast, little is known about the irreducible supersingular representations of $\mathrm{GL}_2(F)$ when $F \neq \Q_p$.  Wu~\cite[Theorem~1.1]{Wu/21} has shown that they do not have finite presentation; this was proved earlier by Schraen~\cite[Th\'{e}or\`{e}me~2.23]{Schraen/15} in the case $[F : \Q_p] = 2$.  Thus a direct construction of such representations appears to be difficult.  However, the method of diagrams introduced by Pa\v{s}k\={u}nas~\cite{Paskunas/04} can be used to show the existence of supersingular representations with certain properties; indeed, such representations of $\mathrm{GL}_2(F)$, for arbitrary $F$, were produced in~\cite[Theorem~6.25]{Paskunas/04}.

A primary motivation for studying  
supersingular representations of $\mathrm{GL}_2(F)$ is their appearance in a hypothetical mod $p$ local Langlands correspondence.  The relevant representations for this purpose 
have a $\mathrm{GL}_2(\mathcal{O}_F)$-socle that is compatible with Serre's modularity conjecture; see Section~\ref{sec:good.socle} below.  We refer to such representations as having {\emph{good socle}}.  While the supersingular representations constructed in~\cite{Paskunas/04} only have good socle when $F = \Q_p$, Breuil and Pa\v{s}k\={u}nas constructed infinite families of diagrams giving rise to supersingular irreducible representations of $\mathrm{GL}_2(F)$ with good socle for all unramified $F/\Q_p$ with $p > 2$.  These families are parametrized by choices of a collection of isomorphisms between one-dimensional $\overline{\mathbb{F}}_p$-vector spaces.  In particular, there are far more irreducible mod $p$ representations of $\mathrm{GL}_2(F)$ than there are mod $p$ representations of the absolute Galois group of $F$.  While the families considered in~\cite{BP/12} are not exhaustive, they include supersingular representations arising from global constructions that are believed to realize the mod $p$ local Langlands correspondence; see~\cite{Breuil/14, BHHMS/21, DL/21} and references therein.

\subsection{Good socles} \label{sec:good.socle}
Let $\mathcal{O}$ be the ring of integers of $F$, and let $k$ be its residue field.  Set $G = \mathrm{GL}_2(F)$ and $K = \mathrm{GL}_2(\mathcal{O})$.  
A \emph{Serre weight} is an irreducible $\overline{\F}_p$-representation of the finite group $\Gamma = \mathrm{GL}_2(k)$, which can be viewed as a representation of $K$ by inflation.  Every irreducible smooth $\overline{\F}_p$-representation of $K$ arises from a Serre weight in this way.  Let $\Sigma$ be the set of Serre weights.  Write $G_F = \mathrm{Gal}(\overline{\Q}_p/F)$, and let $\rho$ be a continuous two-dimensional representation of $G_F$ defined over a sufficiently large extension of $\mathbb{F}_p$.  The mod $p$ local Langlands correspondence is expected to associate to $\rho$ a mod $p$ representation $\pi(\rho)$ of $G$ whose socle as a $K$-module is
$$ \mathrm{soc}_K \pi(\rho) = \bigoplus_{\sigma \in \Sigma} \sigma^{\oplus \mu_{\sigma}(\rho)},$$
where the $\mu_{\sigma}(\rho)$ are the ``intrinsic multiplicities'' whose existence is the content of the generalized Breuil-M\'{e}zard conjecture~\cite[Conjecture~2.1.5]{GK/14}; see also~\cite[Conjecture~2.2.3]{Kisin/10}.  In this paper we consider irreducible $\rho$; in this case $\pi(\rho)$ is expected to be supersingular and irreducible.  It is also expected~\cite[Conjecture~3.2.7]{GHS/18} that $\mu_{\sigma}(\rho) > 0$ if and only if $\sigma \in \mathcal{D}(\rho)$, where $\mathcal{D}(\rho)$ is the set of Serre weights associated to $\rho$ by the weight part of Serre's modularity conjecture.  For semisimple $\rho$, the set $\mathcal{D}(\rho)$ is given explicitly in~\cite[\S2]{Schein/08}; see~\cite[Lemma~4.22]{BLGG/13} and~\cite[Example~7.1.7]{GHS/18} for more modern interpretations of $\mathcal{D}(\rho)$.  In the situations considered in this paper, it is either known~\cite{GK/14, CDM/18} or believed that $\mu_{\sigma}(\rho) = 1$ for all $\sigma \in \mathcal{D}(\rho)$; we thus seek mod $p$ representations $V$ of $G$ whose $K$-socle has the form
\begin{equation} \label{equ:good.socle}
 \mathrm{soc}_K V = \bigoplus_{\sigma \in \mathcal{D}(\rho)} \sigma.
\end{equation}

\subsection{Results}
The main result of this paper is the following.  See Definition~\ref{def:generic} below for the genericity condition imposed on $\rho$, noting that it implies that $p > 2$ and that $F / \Q_p$ has ramification degree $e \leq (p-1)/2$.
\begin{thm} \label{thm:intro}
Let $F / \Q_p$ be a finite extension with residue field $\mathbb{F}_{p^2}$.  If $\rho$ is generic, then there exists an explicit infinite family of diagrams giving rise to irreducible supersingular representations $V$ of $\mathrm{GL}_2(F)$ satisfying~\eqref{equ:good.socle}. 
\end{thm}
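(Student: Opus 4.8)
The plan is to follow Pašk\=unas's method of diagrams as developed in \cite{Paskunas/04} and \cite{BP/12}, adapting it to the case of a ramified extension $F/\Q_p$ with residue field $\F_{p^2}$. Recall that a (basic $0$-)diagram consists of a smooth representation $D_0$ of the normalizer $KZ$ of $K$, a smooth representation $D_1$ of the normalizer $N$ of the Iwahori subgroup, and an injection $D_1 \hookrightarrow D_0$ identifying $D_1$ with $D_0^{I_1}$ ($I_1$ the pro-$p$ Iwahori), together with an action of $\Pi^2$ compatible with the $K$-action. To such a diagram one attaches by the usual amalgamated-sum / universal-module construction a representation $V$ of $G = \mathrm{GL}_2(F)$ with $V^{I_1} \supseteq D_1$, and the key properties one must verify are: (i) $V$ is nonzero and $\mathrm{soc}_K V = \mathrm{soc}_K D_0$, (ii) $V$ is admissible, hence one can pass to an irreducible quotient (or the injective envelope argument of \cite[\S9--\S13]{BP/12}) preserving the socle, and (iii) $V$ is supersingular, i.e. the Hecke operator acting on $V^{I_1}$ is locally nilpotent, which follows automatically from the structure of $D_1$ as a representation of $N$ on which the relevant Hecke operator acts by zero.

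First I would fix, for the given generic $\rho$, the Breuil--Pašk\=unas-style candidate for $D_0$: this is the $K$-representation $\bigoplus_{\sigma \in \mathcal{D}(\rho)} \mathrm{Inj}_\Gamma \sigma$ restricted to the relevant submodule generated by $\mathrm{soc} = \bigoplus_{\sigma\in\mathcal D(\rho)}\sigma$, with a carefully chosen subspace inside the injective hull so that the diagram relations can be satisfied. Here the genericity hypothesis (Definition~\ref{def:generic}), in particular $e \leq (p-1)/2$, is what guarantees that $\mathcal{D}(\rho)$ has the expected combinatorial shape (a set of $2^{f\cdot?}$ weights indexed by subsets, as in \cite{Schein/08, BDJ/10}) and that the weights in $\mathcal{D}(\rho)$ are pairwise "far apart" enough that the extensions between consecutive weights in the relevant Jordan--Hölder series are one-dimensional and can be glued consistently. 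Second, I would write down $D_1 = D_0^{I_1}$ explicitly as an $N$-module; the two generators of $N/I_1 Z$ act, and one must exhibit the "parameter" isomorphisms $\lambda_\sigma$ between the one-dimensional spaces occurring, subject to a finite system of compatibility relations coming from the element $\Pi^2$ and from the shape of the graph on $\mathcal{D}(\rho)$. The infinite family is then obtained by showing this system has an infinite-dimensional (or at least infinite, parametrized by $\ol\F_p^\times$-choices) solution space, exactly as in \cite[\S15--\S19]{BP/12}; inequivalent choices give non-isomorphic $V$.

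The main obstacle, and the technical heart of the paper, is verifying that the diagram relations are consistent when $F$ is ramified. In the unramified case \cite{BP/12} the combinatorics of $\mathcal{D}(\rho)$ and of the $I$-action on $D_0$ are controlled by the "Diamond diagrams" whose vertices are Serre weights and whose edges encode nonsplit extensions; ramification changes the structure of $k[\Gamma]$-modules only through $k = \F_{p^2}$ (which is the unramified case $f=2$), but it drastically changes the group $KZ$-module structure needed to define $\Pi^2$, because $\Pi$ now involves a uniformizer $\varpi_F$ with $\varpi_F^e \sim p$, so $\Pi^{2e}$ rather than $\Pi^2$ is inner. I expect to handle this by enlarging $D_1$ to carry an action of the larger cyclic quotient $N/I_1 Z$ of order $2e$ (rather than $2$), reformulating the compatibility relations as a cocycle-type condition on a cycle of length proportional to $e$ in the weight graph, and then solving it; the bound $e \leq (p-1)/2$ is precisely what keeps this cycle within the generic range where all the relevant $\mathrm{Ext}^1$ spaces are one-dimensional, so that the linear system is explicit and visibly has infinitely many solutions. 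Admissibility (step (ii)) should then follow from the finite-dimensionality of $D_0$ together with the standard argument that $V$ is generated over $G$ by $D_0$ and $V^{I_1}$ stays finite-dimensional in each $K$-isotypic component, and irreducibility/supersingularity follow from \cite{Paskunas/04} once the socle is pinned down. Finally, to confirm $\mathrm{soc}_K V = \bigoplus_{\sigma\in\mathcal D(\rho)}\sigma$ (i.e. that passing from the diagram to $V$ and then to an irreducible subquotient does not enlarge or shrink the socle), I would invoke the injective-envelope machinery: embed $V$ into $\Omega := $ the $K$-injective envelope of $\mathrm{soc}_K D_0$ equipped with a compatible $G$-action, check this embedding is $\Pi$-equivariant, and conclude as in \cite[Theorem~1.3]{BP/12}.
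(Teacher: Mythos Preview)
Your general framework is right---diagrams à la Pa\v{s}k\={u}nas and Breuil--Pa\v{s}k\={u}nas, with $D_1 = D_0^{I(1)}$ and an action of $\Pi$---but the diagnosis of what ramification changes is wrong, and this causes you to miss the actual construction.  You claim that $\Pi^{2e}$ rather than $\Pi^2$ is central, and so propose to enlarge $N/IZ$ to a cyclic group of order $2e$.  In fact $\Pi^2 = \left(\begin{smallmatrix} 0 & 1 \\ \pi & 0 \end{smallmatrix}\right)^2 = \pi\,\mathrm{Id}_2$ is already central regardless of $e$, so $N/IZ$ still has order $2$ and there is no new cocycle condition of the kind you describe.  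The $KZ$-module $D_0$ is still inflated from $\Gamma = \GL_2(k)$ with $\pi\,\mathrm{Id}_2$ acting trivially, exactly as in the unramified case.

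What actually changes is the \emph{set} $\mathcal{D}(\rho)$: it now has $4e^2$ elements, and decomposes as a disjoint union $\coprod_{\underline{\delta}\in\Delta}\mathcal{D}(\underline{\delta})$ over an $e\times e$ grid $\Delta$, where each $\mathcal{D}(\underline{\delta})$ is the weight set of an unramified generic $\rho':G_{F_0}\to\GL_2(\overline{\F}_p)$.  The naive choice of $D_0$ (the one you sketch, built from injective envelopes as in \cite{BP/12}) fails: the argument that a nonzero $G$-submodule $W$ contains all of $D_{0,\sigma}$ breaks down because $\ind_{KZ}^G\tau$ does not produce enough nonsplit extensions.  The paper's key idea, which you do not have, is to choose a Hamiltonian walk $\gamma$ on the lattice $\Delta$ and use it to swap certain cosocle constituents between adjacent $\widetilde D_0(\underline{\delta})$'s, producing length-two modules $D_{0,\sigma}^\gamma$ (each a $Q_{\{0\}}(\sigma)$ or $Q_{\{1\}}(\sigma)$).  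The $\Pi$-action on $D_1$ then induces a permutation $\beta$ of $\mathcal{D}(\rho)$ that is transitive precisely because $\gamma$ is Hamiltonian.  Irreducibility follows because $\Pi v$ generates $D_{0,\beta(\sigma)}^\gamma$, which has length \emph{exactly two}---this is why the hypothesis $f=2$ is essential and is the missing ingredient in your plan.  Supersingularity is then a two-line comparison of lengths: the relevant piece of $\ind_{KZ}^G\sigma$ has length $4$, its image has length $2$, so $T(\mathrm{id}\otimes v)$ lies in the kernel.
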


To the author's knowledge this is the first construction of supersingular representations with good socle for ramified $p$-adic fields.  If $F / \Q_p$ is the quadratic unramified extension, then the supersingular representations associated to our diagrams coincide with those obtained by Breuil and Pa\v{s}k\={u}nas.  The hypotheses of Theorem~\ref{thm:intro} are discussed at the end of Section~\ref{subsec:overview} and in Remark~\ref{rem:reasons} below.  

A curious and rather unexpected feature of our diagrams is that, besides depending on choices of isomorphisms analogous to those in~\cite{BP/12}, they depend on the choice of an element in a large finite set, namely the set of Hamiltonian walks on an $e \times e$ square lattice.  The number of such Hamiltonian walks grows exponentially in $e^2$ and has been studied mostly by physicists; see~\cite[\S9]{BMGJ/05} and numerical computations, as well as predictions based on conformal field-theoretic models, in~\cite[\S XD]{JK/98} and~\cite{Jacobsen/07}.

As a consequence of the ``breakage of symmetry'' involved in the choice of a Hamiltonian walk, the representations of Theorem~\ref{thm:intro} appear unlikely to contribute to the mod $p$ local Langlands correspondence when $F / \Q_p$ is ramified; see Section~\ref{sec:Langlands} below.  The present work shows that the collection of irreducible supersingular representations of $\mathrm{GL}_2(F)$ with good socle has, in general, an even more complex structure than was discovered in~\cite{BP/12}.

\subsection{Overview of the construction} \label{subsec:overview}
In order to motivate our construction, we briefly review some aspects of the earlier work of Breuil and Pa\v{s}k\={u}nas.  Fix a uniformizer $\pi \in \mathcal{O}$.  Let $Z \leq G$ be the center, let $I \leq K$ be the Iwahori subgroup of residually upper triangular matrices, and let $N = N_G(I)$ be its normalizer in $G$.  Recall~\cite[Definition~5.14]{Paskunas/04} that a diagram for $F$ is a triple $(D_0, D_1, \iota)$, where $D_0$ is a smooth mod $p$ representation of $KZ$, whereas
$D_1$ is a smooth mod $p$ representation of $N$ and
$\iota: D_1 \to D_0$ is an $IZ$-equivariant homomorphism.
Suppose that $D_0$ is admissible, that the element $\pi \mathrm{Id}_2 \in Z$ acts trivially, and that $\iota$ is injective.  Then (in the case $p = 2$, which is irrelevant for this paper, a further technical hypothesis~\cite[Definition~9.1]{BP/12} is needed) the injective envelope ${\Omega}$ of $\mathrm{soc}_K D_0$ in the category of $K$-modules may be endowed with an action of $G$ such that $D_0 \subset {\Omega}_{| KZ}$ and $D_1 \subset {\Omega}_{|N}$~\cite[Theorem~9.8]{BP/12}.  

Any irreducible $\overline{\mathbb{F}}_p$-representation of $K$ necessarily factors through the finite quotient $\mathrm{GL}_2(k)$ and is thus a Serre weight; see \S\ref{sec:prelim} below.  
When $F / \Q_p$ is unramified and $\rho$ is generic, Breuil and Pa\v{s}k\={u}nas defined a family of diagrams for which $D_0$ is the maximal representation of $\Gamma = \mathrm{GL}_2(k)$ such that $\mathrm{soc}_{\Gamma} D_0 = \bigoplus_{\sigma \in \mathcal{D}(\rho)} \sigma$ and that no element of $\mathcal{D}(\rho)$ appears as a subquotient of $D_0 / \mathrm{soc}_{\Gamma} D_0$.  
It turns out that $D_0$ decomposes as $D_0 = \bigoplus_{\sigma \in \mathcal{D}(\rho)} D_{0,\sigma}(\rho)$, where $\mathrm{soc}_{\Gamma} D_{0, \sigma}(\rho) = \sigma$.  
We view $D_0$ as a $KZ$-module by inflation, with $\pi \mathrm{Id}_2$ acting trivially.  The $G$-submodule $\Psi \subset {\Omega}$ generated by $D_0$ is irreducible and supersingular.  A key step in the proof of this is the claim that if $W \subseteq \Psi$ is a $G$-submodule such that $W \cap D_{0,\sigma}(\rho) \neq 0$ for some $\sigma \in \mathcal{D}(\rho)$, then $D_{0,\sigma}(\rho) \subset W$; this follows from a rather technical analysis of the structure of $D_{0,\sigma}(\rho)$ and of maps of the form $\mathrm{ind}_{KZ}^G \tau \to W$, for $\tau \in \mathcal{D}(\rho)$, arising from Frobenius reciprocity.  The analogous claim fails when $F/\Q_p$ is ramified, for an analogous definition of $D_0$, since $\mathrm{ind}_{KZ}^G \tau$ does not contain enough non-split extensions of Serre weights factoring through $\Gamma$.

We now briefly describe the present work; the details are found in the body of the paper.
Let $\Q_p \subseteq F_0 \subseteq F$ be the maximal unramified subextension, and let $f = [F_0 : \Q_p]$.  Given an irreducible generic $\rho$ as above, there is a set $\mathcal{S}$ of irreducible generic Galois representations ${\rho}^\prime : G_{F_0} \to \mathrm{GL}_2(\overline{\mathbb{F}}_p)$ such that $| \mathcal{S} | = e^f$ and $\mathcal{D}(\rho) = \bigcup_{{\rho^\prime} \in \mathcal{S}} \mathcal{D}({\rho}^\prime)$; this union is disjoint.  We define $\widetilde{D}_{0,\sigma}$ to be the $KZ$-submodule of $D_{0,\sigma}({\rho}^\prime)$ generated by the ${I(1)}$-invariants, where $I(1) \leq I$ is the pro-$p$-Sylow subgroup.  Then $\widetilde{D}_{0,\sigma}$ has length two if $f = 2$.  We identify $\mathcal{S}$ with the set of vertices of an $e \times e$ square lattice and choose a Hamiltonian walk $\gamma$ through this lattice.  Definition~\ref{def:D0} modifies some of the $\widetilde{D}_{0,\sigma}$, in a way depending on $\gamma$, to obtain $KZ$-modules $D_{0,\sigma}^\gamma$ of length two with socle $\sigma$; these still arise from $\Gamma$ by inflation.  Set $D_0^\gamma(\rho) = \bigoplus_{\sigma \in \mathcal{D}(\rho)} D_{0,\sigma}^\gamma$ and $D_1^\gamma(\rho) = (D_0^\gamma(\rho))^{I(1)}$, with the natural inclusion $\iota: D_1^\gamma(\rho) \hookrightarrow D_0^\gamma(\rho)$.  In contract to the situation in~\cite{BP/12}, Serre weights may appear as subquotients of $D_0^\gamma(\rho)$ with multiplicity greater than one.  However, there is a unique way to extend the $IZ$-action on $D_1^\gamma(\rho)$ to an action of $N$, modulo the choices of isomorphisms mentioned above, if we require that no $N$-orbit be contained in $\mathrm{soc}_K D_0^\gamma (\rho)$.  This $N$-action interweaves the $KZ$-modules $D_{0,\sigma}^\gamma$ for Serre weights $\sigma$ occurring in $\mathcal{D}(\rho^\prime)$ for different Galois representations $\rho^\prime \in \mathcal{S}$ and enables us to prove, in Theorem~\ref{thm:main}, that $G$-modules $\Psi$ arising from the diagrams $(D_0^\gamma(\rho), D_1^\gamma(\rho), \iota)$ are irreducible and supersingular.  
In the course of the proof we observe that if $W \subseteq \Psi$ is a $G$-submodule and $\tau \subseteq \mathrm{soc}_K W$, then the action of $N$ on $\tau^{I(1)}$ provides elements of $D_{0,\sigma}^\gamma$, for some $\sigma \neq \tau$, that are not contained in $\mathrm{soc}_K D_{0,\sigma}^\gamma$ but are contained in $W$.  Since $D_{0,\sigma}^\gamma$ has length two (this is the reason for the hypothesis $f = 2$ in Theorem~\ref{thm:intro}) we immediately conclude that 
$D_{0,\sigma}^\gamma \subset W$, resolving the difficulty discussed at the end of the previous paragraph.

\subsection{Relation to the mod $p$ local Langlands correspondence} \label{sec:Langlands}
As mentioned above, some of the supersingular representations of $\mathrm{GL}_2(F)$ constructed in~\cite{BP/12} for unramified $F/\Q_p$ appear in local-global compatibility results for the mod $p$ Langlands correspondence.  By contrast, the diagrams constructed here for ramified $F/ \Q_p$ seem unlikely to occur in completed cohomology.  
One indication of this is the following.  Breuil~\cite{Breuil/11} proposed an extension to certain diagrams for unramified $F/\Q_p$ of the Colmez functor from $\mathrm{GL}_2(\Q_p)$-modules to $(\varphi,\Gamma)$-modules, and hence to Galois representations, realizing the mod $p$ local Langlands correspondence for $\mathrm{GL}_2(\Q_p)$.  If $D$ is a diagram associated to the irreducible representation $\rho: G_F \to \mathrm{GL}_2(\overline{\mathbb{F}}_p)$, for $F/\Q_p$ unramified, we let $M(D)$ denote the $(\varphi,\Gamma)$-module for $\Q_p$ defined in~\cite{Breuil/11}, and $V(M(D))$ the associated representation of $G_{\Q_p}$.  Then Breuil~\cite[Corollaire~5.4]{Breuil/11} proved that $V(M(D))_{| I_{\Q_p}} \simeq ( \mathrm{Ind}_{G_{F}}^{\otimes G_{\Q_p}} \rho^\vee )_{| I_{\Q_p}}$, where $I_{\Q_p} \leq G_{\Q_p}$ is the inertia subgroup.  Moreover, for certain choices of the isomorphisms defining $D$, one has that $V(M(D))$ is isomorphic to the tensor induction $\mathrm{Ind}_{G_{F}}^{\otimes G_{\Q_p}} \rho^\vee$ as representations of $G_{\Q_p}$.

As we observe in \S\ref{sec:phigamma} below, one can naturally associate a $(\varphi,\Gamma)$-module $M(D)$ for $\Q_p$ to the diagrams $D$ constructed in this paper.  In Proposition~\ref{pro:explicit.phi.gamma} we apply the computations of~\cite{Breuil/11} to give an explicit description of the associated Galois representation $V(M(D))$.  It turns out that even the restriction of $V(M(D))$ to the inertia subgroup $I_F \leq G_F \leq G_{\Q_p}$ depends on the Hamiltonian walk $\gamma$ chosen in the construction of $D$, and that in general there is no choice of $\gamma$ for which we would obtain an isomorphism $V(M(D))_{| I_F} \simeq ( \mathrm{Ind}^{\otimes G_{\Q_p}}_{G_{F_0}} ( \mathrm{Ind}^{G_{F_0}}_{G_F} \rho^\vee ) )_{| I_F}$.

The present work suggests that to obtain a generalization of the construction of~\cite{BP/12} that would be defined more canonically and would admit generalizations of the more recent local-global compatibility results, one should consider diagrams for which the $KZ$-module $D_0$ factors not through $\Gamma$ but only through a larger finite quotient.  There is work in progress in this direction.

\section{Preliminaries} \label{sec:prelim}
\subsection{Serre weights}
As in the introduction,
let $F / \Q_p$ be a finite extension, and let $F_0$ be the maximal unramified subextension.  Let $\mathcal{O}$ be the ring of integers of $F$, fix a uniformizer $\pi \in \mathcal{O}$, and let $k = \mathcal{O} / (\pi)$ denote the residue field.  Let $q = p^f$ be the cardinality of $k$.  If $\xi \in k^\times$, let $[\xi ] \in \mathcal{O}$ denote the $(q-1)$-th root of unity lifting $\xi$, and set $[0] = 0$.  A Serre weight is an irreducible $\overline{\F}_p$-representation of the finite group $\Gamma = \mathrm{GL}_2(k)$, which can be viewed as a representation of $K = \mathrm{GL}_2(\mathcal{O})$ or of $\mathrm{GL}_2(\mathcal{O}_{F_0})$ by inflation.  Every irreducible smooth $\overline{\F}_p$-representation of these two profinite groups arises from a Serre weight in this way.  Let $B \leq \Gamma$ be the subgroup of upper triangular matrices, and let $U \leq B$ be the subgroup of upper triangular matrices all of whose eigenvalues are $1$.  Let $K(1)$ be the kernel of the reduction map $K \twoheadrightarrow \Gamma$, and let $I$ and $I(1)$ be the preimages of $B$ and $U$, respectively.  

Given a Serre weight $\sigma$, write $\chi(\sigma)$ for the character by which the diagonal torus $H \leq B$ acts on $\sigma^{U}$; we also view $\chi(\sigma)$ as a character of $B$ by inflation.  For $v \in \sigma$ and $g \in G = \mathrm{GL}_2(F)$, we denote by $g \tensor v$ the element of the compact induction $\ind_{KZ}^G \sigma$ supported on the right coset $KZ g^{-1}$ and sending $g^{-1}$ to $v$.  Set the notations $\alpha = \left( \begin{array}{cc} 1 & 0 \\ 0 & \pi \end{array} \right)$ and $w = \left( \begin{array}{cc} 0 & 1 \\ 1 & 0 \end{array} \right)$, and denote $\Pi = \alpha w = \left( \begin{array}{cc} 0 & 1 \\ \pi & 0 \end{array} \right)$.  Let $Z$ be the center of $G$ and $\mathrm{Id}_2$ the identity matrix.  The normalizer $N = N_{G}(I(1))$ is generated by $IZ$ and $\Pi$.  Write $\chi^w : H \to \overline{\mathbb{F}}_p^\times$ for the character $\chi^w(h) = \chi(whw)$; this is denoted $\chi^s$ in~\cite{BP/12}.  Let $\sigma^{[w]}$ be the unique Serre weight distinct from $\sigma$ such that $\chi(\sigma^{[w]}) = \chi(\sigma)^w$.  We view Serre weights as representations of $KZ$ by letting $\pi \mathrm{Id}_2$ act trivially.

Fix an embedding $\varepsilon_0 : k \hookrightarrow \overline{\F}_p$ and define embeddings $\varepsilon_i$ for every $i \in \N$ by means of the recursion $\varepsilon_i = \varepsilon_{i-1}^p$.  Then every Serre weight has the form
\begin{equation} \label{equ:serre.weight}
\sigma = \bigotimes_{i = 0}^{f - 1} \left( \mathrm{Sym}^{r_i} k^2 \otimes_{k, \varepsilon_i} \overline{\F}_p \right) \tensor (\eta \circ \det),
\end{equation}
where $0 \leq r_i \leq p - 1$ for every $i$ and $\eta : k^\times \to \overline{\F}_p^\times$ is a character.  
We will write $\sigma = \det^m \tensor (r_0, \dots, r_{f-1})$ for $\sigma$ as in~\eqref{equ:serre.weight}, where $m \in \Z/(q-1)\Z$ is such that $\eta = \varepsilon_0^m$; then $\sigma^{[w]} = \det^{m+r} \tensor (p-1-r_0, \dots, p-1-r_{f-1})$ for $r = \sum_{i = 0}^{f-1} r_i p^i$.  An irreducible $\overline{\mathbb{F}}_p$-representation $V$ of $G$ is called {\emph{supersingular}} if $\lambda = 0$ for every surjective map (equivalently, for one such map) of the form $\ind_{KZ}^G \sigma / (T - \lambda)\ind_{KZ}^G \sigma \to V$, where $\lambda \in \overline{\mathbb{F}}_p$ and $T \in \mathrm{End}_G(\ind_{KZ}^G \sigma)$ is the operator of~\cite[Proposition~8]{BL/94}.

\begin{lemma} \label{lem:ind.structure}
Let $\sigma$ be a Serre weight such that $\chi(\sigma) \neq \chi(\sigma)^w$ and let $0 \neq v \in \sigma^{I(1)}$.  Then $\langle \alpha \tensor wv \rangle_K \subset (\ind_{KZ}^G \sigma)^{K(1)}$.  Moreover, $\langle \alpha \tensor wv \rangle_K \simeq \Ind_I^K \chi(\sigma)^w$ and we have $\mathrm{soc}_K(\langle \alpha \tensor wv \rangle_K) = T(\langle \mathrm{id} \tensor v \rangle_K)$.  
\end{lemma}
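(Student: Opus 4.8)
The plan is to work inside $\ind_{KZ}^G \sigma$ using the explicit description of its $K$-structure, following the methods of~\cite{BL/94} and~\cite{BP/12}. First I would recall that $\ind_{KZ}^G \sigma$ decomposes as a $K$-module into a direct sum $\bigoplus_{n \geq 0} \langle \alpha^n \tensor \sigma \rangle_K$ of the $K$-spans of the standard generators supported on the cosets $KZ \alpha^{-n}$, and that each summand with $n \geq 1$ is isomorphic to $\Ind_{KZ \cap \alpha^n KZ \alpha^{-n}}^K \sigma$ restricted appropriately; in particular, since $\alpha^n KZ \alpha^{-n} \cap K$ contains $I(1)$ for $n \geq 1$, the element $\alpha \tensor wv$ lies in the $I(1)$-invariants whenever $wv$ does, giving $\langle \alpha \tensor wv \rangle_K \subset (\ind_{KZ}^G \sigma)^{K(1)}$ once we check $K(1)$-invariance. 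The key computation is that $v \in \sigma^{I(1)}$ together with $w I(1) w^{-1} \subseteq I(1)$-type conjugation relations forces $wv \in \sigma^{w I(1) w^{-1}}$, i.e. $wv$ is fixed by the opposite unipotent radical mod $K(1)$, and combining this with the defining support condition on $\alpha \tensor wv$ one gets that the stabilizer of $\alpha \tensor wv$ in $K$ contains $K(1)$, hence the $K$-span is contained in the $K(1)$-fixed vectors.

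Next I would identify $\langle \alpha \tensor wv \rangle_K$ as a $K$-representation. Because it is generated by a $K(1)$-fixed vector, it factors through $\Gamma = \GL_2(k)$, and the stabilizer of the line through $\alpha \tensor wv$ in $\Gamma$ is the image of $I$, namely the Borel $B$, acting through the character $\chi(\sigma)^w$ — the conjugation by $\alpha$ (equivalently by $\Pi = \alpha w$) twists the torus action on $v \in \sigma^{U}$ by the Weyl element $w$, which is exactly the content of the identity $\chi^w(h) = \chi(whw)$. Hence $\langle \alpha \tensor wv \rangle_K$ is a quotient of $\Ind_I^K \chi(\sigma)^w = \Ind_B^{\Gamma} \chi(\sigma)^w$, which is $q+1$-dimensional; a dimension count (the $K$-orbit of $\alpha \tensor wv$ within $\ind_{KZ}^G \sigma$ visits the expected $q+1$ cosets of $I$ in $K$, with no collapse because the supports are distinct single cosets of the form $KZ g^{-1}$) shows the surjection is an isomorphism.

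Finally, for the socle statement: since $\chi(\sigma) \neq \chi(\sigma)^w$, the principal series $\Ind_B^{\Gamma} \chi(\sigma)^w$ has length two, with socle the Serre weight whose $U$-invariants carry the character $\chi(\sigma)^w$ — by the very definition of $\sigma^{[w]}$ and $\chi(\cdot)$, this socle is the Serre weight $\tau$ with $\chi(\tau) = \chi(\sigma)^w$. I would then match this up with the right-hand side $T(\langle \mathrm{id} \tensor v \rangle_K)$: the Hecke operator $T$ of~\cite[Proposition~8]{BL/94}, applied to the standard generator $\mathrm{id} \tensor v$, produces an element supported on $KZ\alpha^{-1}$ together with lower terms, and one checks that its $K$-span inside $\langle \alpha \tensor wv \rangle_K$ is precisely the unique sub-Serre-weight; concretely $T(\mathrm{id}\tensor v)$ is, up to the structure of $\Ind_I^K\chi(\sigma)^w$, a nonzero $I(1)$-fixed vector generating the irreducible submodule. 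I expect the main obstacle to be this last identification — pinning down $T(\langle \mathrm{id}\tensor v\rangle_K)$ as exactly the socle rather than a larger submodule — which requires either a careful bookkeeping of the double-coset formula for $T$ in the spirit of~\cite[\S2]{BL/94} or an appeal to the known submodule structure of $\ind_{KZ}^G\sigma$ as a $K$-module; the genericity-type hypothesis $\chi(\sigma) \neq \chi(\sigma)^w$ is what guarantees the relevant principal series is multiplicity-free of length two, so that "the socle" is unambiguous and the computation closes.
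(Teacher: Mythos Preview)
Your overall strategy matches the paper's: show $\alpha\tensor wv$ is $I(1)$-invariant (the paper does this in one line via $\alpha\tensor wv = \Pi(\mathrm{Id}_2\tensor v)$ and the fact that $\Pi$ normalizes $I(1)$), deduce $K(1)$-invariance of its $K$-span by normality, use Frobenius reciprocity to get a surjection from $\Ind_I^K\chi(\sigma)^w$, and finish with a dimension count via the explicit orbit computation. That part is fine.

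The socle paragraph, however, contains two genuine errors. First, your claim that $\Ind_B^\Gamma\chi(\sigma)^w$ has length two under the hypothesis $\chi(\sigma)\neq\chi(\sigma)^w$ is false once $f>1$: for $f=2$ (the case of interest in this paper) the generic principal series has length four, as in Lemma~\ref{lem:socle.filtration}. Fortunately the socle is still irreducible by~\cite[Theorem~2.4]{BP/12}, so this alone does not kill the argument, but your reasoning relies on the wrong structure. Second, and more seriously, you have the socle and cosocle reversed: the socle of $\Ind_B^\Gamma\chi(\sigma)^w$ is $\sigma$ itself (with $U$-eigencharacter $\chi(\sigma)$), while the cosocle is $\sigma^{[w]}$ with character $\chi(\sigma)^w$. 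The generating function $\varphi$ supported on $B$ is the one with character $\chi(\sigma)^w$, and since it generates the whole module it survives to the cosocle, not the socle. This matters because the statement asks you to identify the socle with $T(\langle\mathrm{id}\tensor v\rangle_K)\simeq\sigma$, and your misidentification would make that impossible.

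For the last step the paper proceeds concretely: the explicit $U$-invariant in the socle is $\sum_{\xi\in k}\left(\begin{smallmatrix}\pi & [\xi]\\ 0 & 1\end{smallmatrix}\right)\tensor v$ by~\cite[Lemma~2.7]{BP/12}, and this equals $T(\mathrm{Id}_2\tensor v)$ by the explicit formula for $T$ in~\cite[Proposition~2.1]{Hendel/19}. Your sketch (``one checks that its $K$-span\ldots is precisely the unique sub-Serre-weight'') gestures at this but does not supply the identification; given your reversed picture of the socle, the bookkeeping you anticipate would not close.
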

\begin{proof}
Clearly $\mathrm{Id}_2 \tensor v \in (\mathrm{ind}_{KZ}^G \sigma)^{I(1)}$, and hence $\alpha \tensor wv = \Pi (\mathrm{Id}_2 \tensor v) \in (\mathrm{ind}_{KZ}^G \sigma)^{I(1)}$, since $\Pi$ normalizes $I(1)$.  In particular, $\alpha \tensor wv$ is invariant under the action of $K(1)$, and thus $\langle \alpha \tensor wv \rangle_K \subset (\mathrm{ind}_{KZ}^G \sigma)^{K(1)}$ as $K(1) \unlhd K$. Moreover, $I$ acts on $\alpha \tensor wv$ via the character $\chi(\sigma)^w$, so by Frobenius reciprocity there is a surjection of $K$-modules $\psi: \mathrm{Ind}_I^K \chi(\sigma)^w \twoheadrightarrow \langle \alpha \tensor wv \rangle_K$.  A simple computation shows that 
$$ \left( \begin{array}{cc} [ \xi ] & 1 \\ 1 & 0 \end{array} \right) (\alpha \tensor wv) = \left( \begin{array}{cc} \pi & [ \xi ] \\ 0 & 1 \end{array} \right) \tensor v.$$
The cosets $KZ \left( \begin{array}{cc} \pi & [ \xi ] \\ 0 & 1 \end{array} \right)^{-1}$, for $\xi \in k$, and $KZ \alpha^{-1}$ are pairwise distinct; hence $\dim_{\overline{\mathbb{F}}_p} \langle \alpha \tensor wv \rangle_K \geq q + 1 = \dim_{\overline{\mathbb{F}}_p} \mathrm{Ind}^K_I \chi(\sigma)^w$ and $\psi$ is an isomorphism.  By~\cite[Theorem~2.4]{BP/12} we then have $\mathrm{soc}_K \langle \alpha \tensor wv \rangle_K \simeq \sigma$, and by~\cite[Lemma~2.7]{BP/12} a non-zero element of $(\mathrm{soc}_K \langle \alpha \tensor wv \rangle_K)^{I(1)}$ is given by
$$ \sum_{\xi \in k} \left( \begin{array}{cc} \pi & [ \xi ] \\ 0 & 1 \end{array} \right) \tensor v = T(\mathrm{Id}_2 \tensor v),$$
where the equality follows from the explicit description of $T$ in~\cite[Proposition~2.1]{Hendel/19}.
\end{proof}

Recall that the socle filtration $\{ \mathrm{soc}_i (M) \}$ of a $C$-module $M$, for any group $C$, is defined recursively by $\mathrm{soc}_0 (M) = \mathrm{soc}_C (M)$ and $\mathrm{soc}_i(M)/\mathrm{soc}_{i-1}(M) = \mathrm{soc}_C (M / \mathrm{soc}_{i-1}(M))$.  For the rest of this section, we assume $f = 2$.  Assume $p > 2$, and let $\sigma = \det^m \tensor (r_0, r_1)$ be a Serre weight with $(r_0, r_1) \not\in \{ (0,0), (p-1, p-1) \}$.  If $r_0 - a_0 + p(r_1 - a_1)$ is even, then there are two twists of $(a_0, a_1)$ with the same central character as $\sigma$.  We introduce the following notation for brevity:
\begin{eqnarray} \label{eqn:plus.notation}
(a_0, a_1)_{\sigma}^+ & =&\det\nolimits^{m+\frac{1}{2} (r_0 - a_0 + p(r_1 - a_1))} \tensor (a_0, a_1) \\
\nonumber
(a_0, a_1)_{\sigma}^- & =&\det\nolimits^{m+\frac{1}{2}(p^2 - 1 + r_0 - a_0 + p(r_1 - a_1))} \tensor (a_0, a_1).
\end{eqnarray}
In particular, $\sigma = (r_0, r_1)_\sigma^+$ and $\sigma^w = (p-1-r_0, p-1-r_1)_\sigma^-$.
\begin{lemma} \label{lem:socle.filtration}
Let $\sigma = \det^m \tensor (r_0, r_1)$ be a Serre weight.  If $(r_0, r_1) \not\in \{(0,0), (p-1,p-1) \}$,  then the socle filtration of $\mathrm{Ind}_B^\Gamma \chi(\sigma)^w$ is as follows, reading from left to right:
$$ (r_0, r_1)_\sigma^+ \, {\textbf{---}} \, (p-2-r_0, r_1 - 1)_\sigma^+ \oplus (r_0 - 1, p - 2 - r_1)_\sigma^- \, {\textbf{---}} \, (p-1-r_0, p-1-r_1)_\sigma^-.$$
\end{lemma}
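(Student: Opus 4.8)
The plan is to compute the $\Gamma$-module structure of $\mathrm{Ind}_B^\Gamma \chi(\sigma)^w$ directly, using the standard description of the constituents of principal series of $\mathrm{GL}_2$ over a finite field. Recall that for $k = \mathbb{F}_{p^2}$ and a character $\chi$ of $B$ with $\chi \neq \chi^w$, the induced representation $\mathrm{Ind}_B^\Gamma \chi$ is multiplicity-free of length $4$, with constituents indexed by subsets of the set of embeddings $\{\varepsilon_0, \varepsilon_1\}$; this is the $f = 2$ case of the combinatorial description of Jordan–Hölder factors used throughout \cite{BP/12} (see \cite[\S2]{BP/12}). Concretely, writing $\chi(\sigma)^w$ in terms of the exponents $(r_0, r_1)$ and $m$, the four constituents are exactly the four Serre weights $\sigma = (r_0,r_1)_\sigma^+$, $(p-2-r_0, r_1-1)_\sigma^+$, $(r_0-1, p-2-r_1)_\sigma^-$, and $\sigma^w = (p-1-r_0,p-1-r_1)_\sigma^-$, with the superscripts and determinant twists recording the central character bookkeeping of \eqref{eqn:plus.notation}. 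The first step is thus to verify that these four weights are indeed the constituents, by matching highest weights: one checks that the $B$-character on the $U$-invariants of each listed weight agrees with the corresponding $p$-restricted weight appearing in the ``Brauer character'' decomposition of $\mathrm{Ind}_B^\Gamma \chi(\sigma)^w$.

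Next I would pin down the socle and cosocle. By Frobenius reciprocity, $\mathrm{soc}_\Gamma \mathrm{Ind}_B^\Gamma \chi(\sigma)^w$ consists of those constituents $\tau$ admitting a nonzero $\Gamma$-map from $\mathrm{Ind}_B^\Gamma \chi(\sigma)^w$, equivalently with a $B$-subrepresentation on which $B$ acts by $\chi(\sigma)^w$; the unique such weight is $\sigma$ itself, since $\chi(\tau) = \chi(\sigma)^w$ forces $\tau \in \{\sigma, \sigma^{[w]}\}$ and $\sigma^{[w]}$ does not appear as the \emph{socle} here. (Alternatively, invoke \cite[Theorem~2.4]{BP/12} as in the proof of Lemma~\ref{lem:ind.structure}, which already identifies $\mathrm{soc}_\Gamma \mathrm{Ind}_B^\Gamma \chi(\sigma)^w \simeq \sigma$.) Dualizing, or using that $\mathrm{Ind}_B^\Gamma \chi(\sigma)^w$ is self-dual up to a twist, the cosocle is $\sigma^w = (p-1-r_0, p-1-r_1)_\sigma^-$. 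Since the module has length $4$ and its socle and cosocle are each simple and distinct, the two remaining constituents $(p-2-r_0,r_1-1)_\sigma^+$ and $(r_0-1,p-2-r_1)_\sigma^-$ must constitute the middle layer $\mathrm{soc}_1/\mathrm{soc}_0$; this gives the claimed three-layer shape provided we show the middle layer is semisimple, i.e. that neither of these two weights lies in the socle of the quotient by $\sigma$ in a way that would collapse the filtration — but each has its own distinct central character pattern relative to $\sigma$, and in particular neither equals $\sigma$, so once we know the socle is exactly $\sigma$ the middle layer is forced to be the direct sum of the other two non-cosocle constituents.

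The main obstacle is the genericity-type input needed to guarantee that $\mathrm{Ind}_B^\Gamma \chi(\sigma)^w$ is genuinely multiplicity-free of length $4$ with the stated two-dimensional middle layer, rather than degenerating: this is exactly why the hypothesis $(r_0,r_1) \notin \{(0,0),(p-1,p-1)\}$ is imposed, since at those two values the character $\chi(\sigma)^w$ becomes fixed by $w$ (or one of the ``$\pm$'' twists collapses), changing the constituent count. So the careful part of the argument is the case analysis verifying that, away from those two exceptional pairs, all four listed weights are distinct, the induced module is multiplicity-free, and the socle is simple equal to $\sigma$; once that is in hand, the layer structure is determined by length and self-duality considerations as above, and the determinant twists in \eqref{eqn:plus.notation} are just the explicit solution of the linear congruences matching central characters. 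I expect this to follow from \cite[Lemma~2.2, Theorem~2.4]{BP/12} essentially verbatim in the $f = 2$ case, so the proof can be kept short by citing those results and recording only the explicit identification of the four constituents via their highest weights.
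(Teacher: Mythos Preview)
Your proposal and the paper's proof agree: both simply invoke \cite[Theorem~2.4]{BP/12} (the paper also notes the earlier source \cite[Theorem~C]{BS/00}), and your final recommendation to cite that result and only record the explicit identification of the four constituents is exactly what the paper does.

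One caveat about the self-contained sketch you offer along the way: the step ``once we know the socle is exactly $\sigma$ the middle layer is forced to be the direct sum of the other two non-cosocle constituents'' is not valid as stated. Simple socle and simple cosocle in a length-$4$ multiplicity-free module do not by themselves preclude Loewy length $4$; a uniserial chain $\sigma \,\textbf{---}\, \tau_1 \,\textbf{---}\, \tau_2 \,\textbf{---}\, \sigma^w$ is not ruled out by those conditions (nor by self-duality alone, which would only force the chain to be symmetric under the dualizing involution). Pinning the Loewy length at $3$ genuinely requires knowing that $\mathrm{Ext}^1_\Gamma$ between the two middle constituents vanishes, or equivalently the full layer structure, and that is precisely the content of \cite[Theorem~2.4]{BP/12}. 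Also, your Frobenius reciprocity is slightly garbled: $\mathrm{Hom}_\Gamma(\tau, \mathrm{Ind}_B^\Gamma \chi) \simeq \mathrm{Hom}_B(\tau,\chi)$ detects $B$-\emph{quotients} of $\tau$ (i.e.\ the $U$-coinvariants), not $B$-subrepresentations. Neither issue affects your conclusion, since you correctly defer to the cited theorem.
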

\begin{proof}
This is a special case of~\cite[Theorem~2.4]{BP/12}, which itself was originally established, in somewhat different terminology, by Bardoe and Sin~\cite[Theorem~C]{BS/00}.
\end{proof}

Given a Serre weight $\sigma = \det^m \tensor (r_0,r_1)$ such that $0 \leq r_0, r_1 \leq p - 2$, define $Q_{ \{ 0 \}}(\sigma)$ and $Q_{ \{ 1 \}}(\sigma)$ to be the quotients having socle $\sigma$ of $\Ind_B^\Gamma \chi((p-2-r_0, r_1 + 1)_{\sigma}^+)$ and $\Ind_B^\Gamma \chi((r_0 + 1, p-2-r_1)_{\sigma}^-)$, respectively.  It follows from Lemma~\ref{lem:socle.filtration} that these are $\Gamma$-modules of length two with socle filtration
\begin{align*}
Q_{\{ 0 \} }(\sigma):& \,  &\det\nolimits^m \tensor (r_0, r_1)& \, &\textbf{---}& \, &\det\nolimits^{m+r_0 + 1 - p} \tensor (p-2-r_0, r_1 + 1) \\
Q_{\{ 1 \} }(\sigma):& \, &\det\nolimits^m \tensor (r_0, r_1)& \, &\textbf{---}& \, &\det\nolimits^{m + p r_1 + p - 1} \tensor (r_0 + 1, p - 2 - r_1).
\end{align*}

\begin{lem} \label{lem:two.dim.invariants}
Let $\sigma = \det^m \tensor (r_0, r_1)$ be a Serre weight so that $0 \leq r_0, r_1 \leq p - 2$.  Then 
$$\dim_{\overline{\mathbb{F}}_p} Q_{\{ 0 \} }(\sigma)^{U} = \dim_{\overline{\mathbb{F}}_p} Q_{\{ 1 \} }(\sigma)^{U} = 2.$$
\end{lem}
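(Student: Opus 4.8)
The plan is to produce two linearly independent $U$-fixed vectors in $Q := Q_{\{0\}}(\sigma)$ (and likewise in $Q_{\{1\}}(\sigma)$) and to note that there cannot be more. The upper bound is cheap: by construction $Q$ has length two, with socle $\sigma$ and cosocle the Serre weight $\sigma'$ appearing in the socle filtrations displayed just before the statement, so taking $U$-invariants in the exact sequence $0 \to \sigma \to Q \to \sigma' \to 0$ gives $\dim_{\overline{\mathbb{F}}_p} Q^U \le \dim_{\overline{\mathbb{F}}_p} \sigma^U + \dim_{\overline{\mathbb{F}}_p}(\sigma')^U = 2$, since every Serre weight has a one-dimensional space of $U$-invariants (its highest-weight line). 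Thus the real task is the matching lower bound, i.e.\ finding a $U$-fixed vector of $Q$ outside $\mathrm{soc}_\Gamma Q = \sigma$.

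For this, recall that $Q$ is by definition a quotient $M/N$ of the principal series $M = \Ind_B^\Gamma \chi(\tau)$, where $\tau = (p-2-r_0, r_1+1)_\sigma^+$ for $Q_{\{0\}}(\sigma)$ and $\tau = (r_0+1, p-2-r_1)_\sigma^-$ for $Q_{\{1\}}(\sigma)$, with $N \subsetneq M$ a proper submodule. The module $M$ is cyclic: it is generated over $\Gamma$ by the standard section $v_0$ supported on $B$ with $v_0(1)=1$, whose $\Gamma$-translates are supported on distinct $B$-cosets and hence form a basis of $M$. Now $v_0$ is fixed by $U$ (because $BU=B$), and $H$ acts on $v_0$ through $\chi(\tau)$. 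Since $N$ is proper, $v_0 \notin N$, so its image $\bar v_0 \in Q$ is a nonzero $U$-fixed vector lying in the $\chi(\tau)$-eigenspace of $H$. On the other hand $\mathrm{soc}_\Gamma Q = \sigma$, and $\sigma^U$ is exactly the $\chi(\sigma)$-eigenspace of $H$ inside $\sigma$; so once we know $\chi(\tau) \ne \chi(\sigma)$ it follows that $\bar v_0 \notin \sigma$, and then $\bar v_0$ together with a generator of $\sigma^U$ span a two-dimensional subspace of $Q^U$, giving $\dim_{\overline{\mathbb{F}}_p} Q^U \ge 2$ and completing the proof.

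The remaining input, $\chi(\tau) \ne \chi(\sigma)$, is the one place where the hypothesis $0 \le r_0, r_1 \le p-2$ is genuinely used, and I expect this short computation — together with carefully matching the notation of the definition of $Q_J(\sigma)$ against Lemma~\ref{lem:socle.filtration} — to be the only delicate point. From the formula for $(a_0,a_1)_\sigma^\pm$ one reads off that $\tau$ is a determinant twist of $(p-2-r_0, r_1+1)$ (resp.\ $(r_0+1, p-2-r_1)$) with determinant exponent $m + r_0 + 1 - p$ (resp.\ $m + pr_1 + p - 1$); evaluating $\chi(\sigma)$ and $\chi(\tau)$ on $\diag(a,d)$, the exponent of $\varepsilon_0(d)$ is $m$ in the first and $m + r_0 + 1 - p$ (resp.\ $m + pr_1 + p - 1$) in the second, and these are distinct modulo $q-1$ precisely because $r_0 < p-1$ (resp.\ $r_1 < p-1$). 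Conceptually: the cosocle of the principal series defining $Q_J(\sigma)$ has $H$-character different from that of its socle as soon as the relevant $r_i$ fails to equal $p-1$, and this is exactly what prevents $\dim_{\overline{\mathbb{F}}_p} Q^U$ from dropping to $1$. The two cases $J = \{0\}$ and $J = \{1\}$ go through symmetrically.
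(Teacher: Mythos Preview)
Your proof is correct and follows essentially the same approach as the paper: the upper bound via the length-two filtration, and the lower bound via the image $\bar v_0$ of the $U$-invariant cyclic generator of the principal series. The one difference is that the paper shows $\bar v_0 \notin \mathrm{soc}_\Gamma Q$ more quickly by noting that $\bar v_0$ generates $Q$ (a fact you already have in hand, since $v_0$ generates $M$) and hence cannot lie in any proper submodule, which avoids your explicit verification that $\chi(\tau) \neq \chi(\sigma)$.
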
  
\begin{proof}
The unique irreducible submodule of $Q_{\{ 0 \}}(\sigma)$ contains a one-dimensional subspace of $U$-invariants~\cite[Lemma~2]{BL/94}.  It is easy to see that $\mathrm{Ind}_B^\Gamma \chi$, for any character $\chi$, is generated by the function $\varphi$ supported on $B$ such that $\varphi(b) = \chi(b)$ for all $b \in B$.  Clearly $\varphi$ is invariant under the action of $U$.  Hence $Q_{\{ 0 \}}(\sigma)$, which is a quotient of $\mathrm{Ind}_B^\Gamma \chi$ for a suitable $\chi$, can be generated by a $U$-invariant.  Thus $\dim_{\overline{\mathbb{F}}_p} Q_{\{ 0 \}}(\sigma)^U \geq 2$.  On the other hand, $Q_{\{ 0 \}}(\sigma)$ is an extension of two irreducible modules, each of which has a one-dimensional space of $U$-invariants.  Thus $\dim_{\overline{\mathbb{F}}_p} Q_{\{ 0 \}}(\sigma)^U \leq 2$.  The same argument applies to $Q_{\{ 1 \}}(\sigma)$.
\end{proof}

\subsection{Generic Galois representations and modular Serre weights}
Let $G_F$ be the absolute Galois group of $F$ and $I_F \leq G_F$ the inertia subgroup.
Let $k^\prime$ be the quadratic extension of $k$, let $F^{\mathrm{nr}}$ be the maximal unramified extension of $F$, and let $L^\prime / F^{\mathrm{nr}}$ be totally ramified such that $\mathrm{Gal}(L^\prime / F^{\mathrm{nr}}) \simeq (k^\prime)^\times$.  Consider the natural projection $\nu^\prime : I_F \to \mathrm{Gal}(L^\prime / F^{\mathrm{nr}})$.
Let $\omega_{2f} = \varepsilon_0^\prime \circ \nu^\prime : I_F \to \overline{\mathbb{F}}_p^\times$ be a fundamental character of level $2f$ corresponding to an embedding $\varepsilon_0^\prime: k^\prime \hookrightarrow \overline{\mathbb{F}}_p$ that restricts to $\varepsilon_0$ on $k$.

\begin{dfn} \label{def:generic}
An irreducible representation $\rho: G_F \to \mathrm{GL}_2(\overline{\mathbb{F}}_p)$ 
is {\emph{generic}} if $\rho_{| I_F}$ is isomorphic to a twist of
\begin{equation} \label{equ:form}
\omega_{2f}^{\sum_{i = 0}^{f-1} p^i (r_i + 1)} \oplus \omega_{2f}^{q \sum_{i = 0}^{f-1} p^i (r_i + 1)}
\end{equation}
with $2e - 1 \leq r_0 \leq p - 2$ and $2e - 2 \leq r_i \leq p - 3$ for $i > 0$.  
\end{dfn}

Note that if $e = 1$, then Definition~\ref{def:generic} coincides with the notion of genericity of~\cite[Definition~11.7]{BP/12}.
If $\rho: G_F \to \mathrm{GL}_2(\overline{\F}_p)$ is irreducible and $\mathcal{D}(\rho)$ is the set of Serre weights associated to it in~\cite[\S2]{Schein/08}, then by~\cite[Proposition~3.1]{Schein/09} $\mathcal{D}(\rho)$ is a union of sets of Serre weights associated to irreducible representations of $G_{F_0}$ in~\cite[\S3.1]{BDJ/10}.  If $\rho$ is generic, then there are $e^f$ such representations of $G_{F_0}$, they are all generic, and the associated sets of Serre weights each have $2^f$ elements, are pairwise disjoint, and are described in~\cite[\S11]{BP/12}.  We now describe $\mathcal{D}(\rho)$ explicitly in the case that we will treat.

\begin{asm}
{\emph{For the rest of this note, assume $f = 2$ and that $\rho: G_F \to \mathrm{GL}_2(\overline{\mathbb{F}}_p)$ is irreducible and generic.}}  
\end{asm}

Suppose that $\rho_{| I_F}$ is~\eqref{equ:form} twisted by $\omega_{4}^{m(p^2 + 1)}$, and fix the Serre weight $\tau = \det^m \tensor (r_0, r_1)$.
Consider the set
$\Delta = \{ (\delta_0, \delta_{1}) : 0 \leq \delta_0, \delta_1 \leq e - 1 \}$.  For every $\underline{\delta} = (\delta_0, \delta_1) \in \Delta$, let $\mathcal{D}(\underline{\delta})$ be the set of four Serre weights appearing in the socle of~\eqref{equ:schematic} below.  Then $\mathcal{D}(\rho)$ is the disjoint union
\begin{equation} \label{equ:decomposition}
\mathcal{D}(\rho) = \coprod_{\underline{\delta} \in \Delta} \mathcal{D}(\underline{\delta}).
\end{equation}
We associate to every $\sigma \in \mathcal{D}(\rho)$ a pair $(\underline{\delta}_\sigma, J_\sigma)$, where $\underline{\delta}_\sigma \in \Delta$ is determined by $\sigma \in \mathcal{D}(\underline{\delta}_{\sigma})$, whereas $J_\sigma \subseteq \{ 0, 1 \}$ 
is the set associated to $\sigma$ in~\cite[\S11]{BP/12}.  These sets are $\varnothing$, $\{ 0 \}$, $\{ 1 \}$, and $\{ 0, 1 \}$, by order of appearance in~\eqref{equ:schematic}.

For $\underline{\delta} \in \Delta$, let $\widetilde{D}_0(\underline{\delta})$ be the $\Gamma$-module generated by the $U$-invariants of the $\Gamma$-module $D_0(\rho^\prime)$ associated to $\mathcal{D}(\underline{\delta})$ in~\cite[\S13]{BP/12}; here $\rho^\prime$ is an irreducible representation of $G_{F_0}$ such that $\mathcal{D}(\rho^\prime) = \mathcal{D}(\underline{\delta})$.  Then $\widetilde{D}_0(\underline{\delta})$ is the following direct sum of four $\Gamma$-modules: 
\begin{eqnarray} \label{equ:schematic}
{(r_0 - 2 \delta_0, r_1 - 2 \delta_1)_{\tau}^+} \, & \textbf{---} \, & (r_0 - 2 \delta_0 + 1, p - r_1 + 2 \delta_1 - 2)_{\tau}^- \\ 
\nonumber & \oplus & \\ \nonumber
{(r_0 - 2 \delta_0 - 1, p - r_1 + 2 \delta_1 - 2)_{\tau}^-} \,& \textbf{---} \, & (p - r_0 + 2 \delta_0 - 1, p - r_1 + 2 \delta_1 - 1)_{\tau}^-  \\ \nonumber & \oplus & \\ \nonumber
{(p - r_0 + 2 \delta_0 - 2, r_1 - 2 \delta_1 + 1)_{\tau}^+} \, & \textbf{---} \, & (r_0 - 2 \delta_0, r_1 - 2 \delta_1 + 2)_{\tau}^+ \\ \nonumber & \oplus & \\ \nonumber
{(p - r_0 + 2 \delta_0 - 1, p - r_1 + 2 \delta_1 - 3)_{\tau}^-} \, & \textbf{---} \,  & (p - r_0 + 2 \delta_0, r_1 - 2 \delta_1 + 1)_{\tau}^+.
\end{eqnarray}
Observe that $\widetilde{D}_0(\underline{\delta}) = \bigoplus_{\sigma \in \mathcal{D}(\underline{\delta})} \widetilde{D}_{0,\sigma}$, where $\widetilde{D}_{0,\sigma} = Q_{\{ 0 \}}(\sigma)$ if $J_\sigma = \{ 0 \}$ or $J_\sigma = \{ 1 \}$, and $\widetilde{D}_{0,\sigma} = Q_{\{ 1 \}}(\sigma)$ otherwise.

\begin{rem} \label{rem:reasons}
If $e > (p-1)/2$, then no generic Galois representations $\rho$ exist.  We restrict our attention to generic $\rho$ for several reasons.  Firstly, although it is known by~\cite[Theorem~A]{GK/14} 
that $\mu_\sigma(\rho) > 0$ if and only if $\sigma \in \mathcal{D}(\rho)$, provided one believes the Breuil-M\'{e}zard conjecture and hence that the multiplicities $\mu_{\sigma}(\rho)$ exist, in the case of $F / \Q_p$ ramified nothing has yet been proved about the value of $\mu_\sigma(\rho)$ when it is positive.
However, for generic $\rho$ it is expected that $\mu_{\sigma}(\rho) = 1$ for all $\sigma \in \mathcal{D}(\rho)$; thus for generic $\rho$ we are able to specify in~\eqref{equ:good.socle} the $K$-socle of the desired supersingular representations.

Secondly, if $\rho$ is non-generic, then some of the representations $\rho^\prime$ of $G_{F_0}$ appearing in the right-hand side of the analogue of~\eqref{equ:decomposition} are non-generic, and the associated sets $\mathcal{D}(\rho^\prime)$ of Serre weights do not admit the shape of the socle of~\eqref{equ:schematic}.  See~\cite[\S4.1.1]{David/17} for an explicit description of $\mathcal{D}(\rho^\prime)$ in non-generic cases.  Moreover, if $\rho$ is non-generic, then $\mathcal{D}(\rho)$ contains Serre weights $\sigma$ of the form~\eqref{equ:serre.weight} with $r_0 = p-1$ or $r_1 = p - 1$.  Such Serre weights do not arise in $\mathrm{soc}_1(\mathrm{Ind}^{\Gamma}_B \chi)$ for any character $\chi$, and hence $Q_{\{ 0 \}}(\sigma)$ and $Q_{\{ 1 \}}(\sigma)$ cannot be defined.  The uniform construction of the following section is thus limited to diagrams whose $K$-socle has the form $\bigoplus_{\sigma \in \mathcal{D}(\rho)} \sigma$ for generic $\rho$.
\end{rem}

\section{Families of diagrams}
\subsection{Construction} \label{sec:construction}
We follow the spirit, although not the actual technique, of the constructions of non-admissible irreducible $\overline{\mathbb{F}}_p$-representations of $G$ in~\cite{Le/19, GS/20} (see also the expository~\cite{GS/19}) to obtain an irreducible diagram from $\widetilde{D}_0(\rho) = \bigoplus_{\underline{\delta} \in \Delta} \widetilde{D}_0(\underline{\delta})$ by defining an action of $\Pi$ on $\widetilde{D}_{0}(\rho)^{I(1)}$ that interweaves the direct summands $\widetilde{D}_0(\underline{\delta})$.  First we replace $\widetilde{D}_0(\rho)$ with a new $\Gamma$-module $D_0(\rho)$ by modifying some of the components $\widetilde{D}_{0,\sigma}$.

If $\sigma \in \mathcal{D}(\rho)$, let $\kappa(\sigma)$ denote the $\Gamma$-cosocle of $\widetilde{D}_{0, \sigma}$.  Since $\widetilde{D}_{0,\sigma}$ is a non-split extension of two irreducible $\Gamma$-modules, $\kappa(\sigma)$ is irreducible.  The following is obvious by inspection.

\begin{lemma}
Let $\sigma, \tau \in \mathcal{D}(\rho)$.  Then $\kappa(\sigma) \simeq \kappa(\tau)^{[w]}$ if and only if one of the following holds:
\begin{enumerate}
\item The pairs associated to $\sigma$ and $\tau$ are $((\delta_0, \delta_1), \{ 0 \})$ and $((\delta_0, \delta_1 + 1), \{ 1 \})$ for some $(\delta_0, \delta_1) \in \Delta$.
\item The pairs associated to $\sigma$ and $\tau$ are $((\delta_0, \delta_1), \{ 0,1 \})$ and $((\delta_0 + 1, \delta_1), \varnothing)$ for some $(\delta_0, \delta_1) \in \Delta$.
\end{enumerate}
\end{lemma}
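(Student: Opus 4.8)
The statement is an "obvious by inspection" claim, so my plan is to make the inspection explicit and organized. The key data are: for each $\sigma \in \mathcal{D}(\rho)$ with pair $(\underline{\delta}_\sigma, J_\sigma)$, the cosocle $\kappa(\sigma)$ is the top Serre weight of the appropriate length-two module $\widetilde{D}_{0,\sigma}$, which by the explicit socle filtrations of $Q_{\{0\}}(\sigma)$ and $Q_{\{1\}}(\sigma)$ recorded just before Lemma~\ref{lem:two.dim.invariants} can be written down in the $\det^m \otimes (a_0,a_1)$ normal form. The plan is first to tabulate, reading directly off~\eqref{equ:schematic}, the four Serre weights of $\mathcal{D}(\underline\delta)$ together with their $J$-labels ($\varnothing, \{0\}, \{1\}, \{0,1\}$ in order of appearance) and then the cosocle of each $\widetilde{D}_{0,\sigma}$: for $J_\sigma \in \{\{0\},\{1\}\}$ we take $\widetilde{D}_{0,\sigma} = Q_{\{0\}}(\sigma)$, whose cosocle is $\det^{m+r_0+1-p}\otimes(p-2-r_0, r_1+1)$ when $\sigma = \det^m\otimes(r_0,r_1)$; for $J_\sigma \in \{\varnothing,\{0,1\}\}$ we take $Q_{\{1\}}(\sigma)$, whose cosocle is $\det^{m+pr_1+p-1}\otimes(r_0+1,p-2-r_1)$.

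Next, I would record what the involution $[w]$ does: if $\kappa = \det^a\otimes(b_0,b_1)$ then $\kappa^{[w]} = \det^{a+b_0+pb_1}\otimes(p-1-b_0, p-1-b_1)$, using the formula for $\sigma^{[w]}$ from the preliminaries (with $f=2$). So the condition $\kappa(\sigma) \simeq \kappa(\tau)^{[w]}$ becomes a pair of congruences: equality of the $(b_0,b_1)$-tuples after applying $x \mapsto p-1-x$ to one of them, plus equality of the determinant twists mod $p^2-1$. With the table in hand, checking $\kappa(\sigma)\simeq\kappa(\tau)^{[w]}$ reduces to: (a) matching tuples, which forces a linear relation between $\underline\delta_\sigma, \underline\delta_\tau$ and between $J_\sigma, J_\tau$ — this is where one sees that the only possibilities are the "shift $\delta_1$ by one, swap $\{0\}\leftrightarrow\{1\}$" pattern of case~(1) and the "shift $\delta_0$ by one, swap $\{0,1\}\leftrightarrow\varnothing$" pattern of case~(2); and (b) verifying that in exactly these two configurations the determinant twists also agree, which is a short computation with the twist exponents from~\eqref{eqn:plus.notation} and the explicit $Q_{\{0\}}, Q_{\{1\}}$ twists. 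One should also confirm that no weight is related to itself (the tuple $(b_0,b_1) = (p-1-b_0,p-1-b_1)$ would force $b_0 = b_1 = (p-1)/2$, which is incompatible with the genericity ranges), and that the tuple-matching does not accidentally identify $\kappa$'s coming from the same $\underline\delta$ with unequal $J$.

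The only real subtlety — and what I expect to be the main bookkeeping obstacle — is keeping the $\pm$ normalization in~\eqref{eqn:plus.notation} straight across the four summands of~\eqref{equ:schematic}, since the determinant twist depends on whether a weight is written with a $+$ or a $-$ superscript relative to the base weight $\tau = \det^m\otimes(r_0,r_1)$, and the cosocles $\kappa(\sigma)$ inherit a twist of their own from $Q_{\{0\}}/Q_{\{1\}}$. I would handle this by fixing once and for all an expression $\kappa(\sigma) = \det^{m_\sigma}\otimes(b_0(\sigma), b_1(\sigma))$ with $m_\sigma$ written as $m$ plus an explicit function of $(\delta_0,\delta_1, r_0, r_1)$ depending on which of the four rows $\sigma$ occupies, then comparing. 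Once the table is assembled the verification is mechanical and the two listed cases emerge as the only solutions; since the paper only needs the equivalence and not the table itself, I would present the argument as "comparing the explicit forms of $\kappa(\sigma)$ and $\kappa(\tau)^{[w]}$ obtained from Lemma~\ref{lem:socle.filtration} and~\eqref{eqn:plus.notation}, one checks directly that they agree precisely in cases~(1) and~(2)."
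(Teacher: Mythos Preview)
Your proposal is correct and is exactly the ``inspection'' the paper alludes to, just carried out in detail; the paper offers no argument beyond the sentence ``The following is obvious by inspection.'' One minor simplification: rather than recomputing each $\kappa(\sigma)$ from the $Q_{\{0\}}/Q_{\{1\}}$ cosocle formulas, you can read the cosocles directly off the right-hand column of~\eqref{equ:schematic}, which already records them in the $(\,\cdot\,,\,\cdot\,)_\tau^{\pm}$ notation and spares you the bookkeeping you flag as the main obstacle.
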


Consider the graph with vertex set $\Delta$, where two vertices $(\delta_0, \delta_1)$ and $(\delta_0^\prime, \delta_1^\prime)$ are adjacent if $(\delta_0^\prime, \delta_1^\prime) \in \{ (\delta_0 \pm 1, \delta_1), (\delta_0, \delta_1 \pm 1) \}$; this is an $e \times e$ square lattice.  Fix a Hamiltonian walk $\gamma$ in this graph, namely an undirected path that traverses each vertex exactly once, with no restriction on the starting and ending vertices.  It is clear that such paths exist.  We say that two adjacent elements of $\Delta$ are $\gamma$-adjacent if $\gamma$ contains the edge connecting them.  

\begin{dfn} \label{def:D0}
Let $\sigma \in \mathcal{D}(\rho)$ be associated to the pair $((\delta_0, \delta_1), J)$, and let $\gamma$ be a Hamiltonian walk.  Define a $\Gamma$-module $D^\gamma_{0,\sigma}$ as follows:
$$
D^\gamma_{0,\sigma} = \begin{cases}
Q_{\{ 1 \} }(\sigma) &: (\delta_0, \delta_1) \text{ is $\gamma$-adjacent to } (\delta_0, \delta_1 + 1) \text{ and } J = \{ 0 \} \\
Q_{\{ 1 \} }(\sigma) &: (\delta_0, \delta_1) \text{ is $\gamma$-adjacent to } (\delta_0, \delta_1 - 1) \text{ and } J = \{ 1 \} \\
Q_{\{ 0 \} }(\sigma) &: (\delta_0, \delta_1) \text{ is $\gamma$-adjacent to } (\delta_0 + 1, \delta_1) \text{ and } J = \{ 0, 1 \} \\
Q_{\{ 0 \} }(\sigma) &: (\delta_0, \delta_1) \text{ is $\gamma$-adjacent to } (\delta_0 - 1, \delta_1) \text{ and } J = \varnothing \\
\widetilde{D}_{0,\sigma} &: \text{ otherwise}.
\end{cases}
$$
Set $D_0^\gamma(\rho) = \bigoplus_{\sigma \in \mathcal{D}(\rho)} D^\gamma_{0,\sigma}$.  We usually write $D_0(\rho)$ for $D_0^\gamma(\rho)$ to lighten the notation.
\end{dfn}

Informally, if we view $\widetilde{D}_0(\rho)$ schematically as in~\eqref{equ:schematic}, then to obtain $D_{0}(\rho)$ we switch one Serre weight appearing in the cosocle of $\widetilde{D}_0(\underline{\delta})$ with a Serre weight appearing in the cosocle of $\widetilde{D}_0(\underline{\delta}^\prime)$ whenever $\underline{\delta}$ and $\underline{\delta}^\prime$ are $\gamma$-adjacent.  If $e = 1$, then $\Delta$ has only one vertex and there are no switches, so the $D_0(\rho)$ constructed here is the $\Gamma$-submodule of the $\Gamma$-module $D_0(\rho)$ constructed in~\cite{BP/12} generated by its $U$-invariants.  

\begin{rem}
Observe that the $\Gamma$-module $D_0(\rho)$ defined above is a direct sum of $4e^2$ non-split extensions of two Serre weights.  Moreover, since $D_0(\rho)$ is a direct sum of $\Gamma$-modules of the form~\eqref{equ:schematic}, up to permutation of Serre weights appearing in the cosocle, it is clear that a Serre weight $\sigma$ appears in the socle of $D_0(\rho)$ if and only if $\sigma^{[w]}$ appears in the cosocle.  If $e > 1$, then there exist Serre weights $\sigma$ such that $\{ \sigma, \sigma^{[w]} \} \subset \mathcal{D}(\rho)$; see Example~\ref{exm:example} below.  In this case, $\sigma$ and $\sigma^{[w]}$ each appears both in the socle and in the cosocle of $D_0(\rho)$.  Thus, unlike the situation of~\cite[\S14]{BP/12}, the $\Gamma$-module $D_0(\rho)$ is not multiplicity-free when $F / \Q_p$ is ramified.
\end{rem}

Observe that $\dim_{\overline{\mathbb{F}}_p} D_0(\rho)^U = 8e^2$ by Lemma~\ref{lem:two.dim.invariants}, and that a $4e^2$-dimensional subspace lies inside $\mathrm{soc}_{\Gamma}  D_0(\rho)$. 
We have the following analogue of~\cite[Corollary~13.6]{BP/12}.

\begin{lemma} \label{lem:partition}
There is a unique partition of the $B$-eigencharacters of $D_0(\rho)^U$ into pairs $\{ \chi, \chi^w \}$, with $\chi \neq \chi^w$, such that one character of each pair arises from the socle of $D_0(\rho)$ and the other from the cosocle.
\end{lemma}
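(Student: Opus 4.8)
The plan is to understand $D_0(\rho)^U$ as a $B$-representation by decomposing it according to the direct sum structure $D_0(\rho) = \bigoplus_{\sigma \in \mathcal{D}(\rho)} D_{0,\sigma}^\gamma$ and then examining each length-two summand separately. For a fixed $\sigma$, the summand $D_{0,\sigma}^\gamma$ is either $Q_{\{0\}}(\sigma)$ or $Q_{\{1\}}(\sigma)$, and by Lemma~\ref{lem:two.dim.invariants} its space of $U$-invariants is two-dimensional. Since $D_{0,\sigma}^\gamma$ is a non-split extension of two Serre weights $\sigma$ (the socle) and $\kappa(\sigma)$ (the cosocle), and each Serre weight contributes a one-dimensional $U$-invariant space on which $H$ acts by a specified character (namely $\chi(\sigma)$ for the socle and $\chi(\kappa(\sigma))$ for the cosocle), we get that the two $B$-eigencharacters occurring in $D_{0,\sigma}^{\gamma,U}$ are precisely $\chi(\sigma)$ and $\chi(\kappa(\sigma))$. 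The first thing to check is that $\chi(\kappa(\sigma)) = \chi(\sigma)^w$ for every $\sigma \in \mathcal{D}(\rho)$; this follows by direct inspection from the explicit socle filtrations of $Q_{\{0\}}(\sigma)$ and $Q_{\{1\}}(\sigma)$ displayed after Lemma~\ref{lem:socle.filtration}, combined with the formula $\sigma^{[w]} = \det^{m+r} \otimes (p-1-r_0, p-1-r_1)$ and the definition of $\chi^w$. (This is exactly the relation $\kappa(\sigma) = \kappa(\sigma)^{[w]}$-type computation underlying the lemma preceding Definition~\ref{def:D0}.)

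Granting this, each summand $D_{0,\sigma}^\gamma$ supplies the pair $\{\chi(\sigma), \chi(\sigma)^w\}$, with $\chi(\sigma)$ coming from the socle of $D_0(\rho)$ and $\chi(\sigma)^w$ from the cosocle. The genericity hypothesis guarantees $\chi(\sigma) \neq \chi(\sigma)^w$: indeed, $\chi(\sigma) = \chi(\sigma)^w$ would force $\sigma \cong \sigma^{[w]}$ in terms of the torus action, which by the formula for $\sigma^{[w]}$ happens only when $(r_0,r_1) \in \{(0,0),(p-1,p-1)\}$ up to twist, and such weights are excluded from $\mathcal{D}(\rho)$ for generic $\rho$ by Definition~\ref{def:generic} and Remark~\ref{rem:reasons}. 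Thus the partition into pairs $\{\chi,\chi^w\}$ exists: it is the partition indexed by $\sigma \in \mathcal{D}(\rho)$, each block being the two-element set of characters appearing in $D_{0,\sigma}^{\gamma,U}$.

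For uniqueness, suppose there is another such partition $\mathcal{Q}$ of the multiset of $B$-eigencharacters of $D_0(\rho)^U$ into pairs $\{\chi,\chi^w\}$ with one member from the socle and one from the cosocle. A given character $\chi$ arising from the socle must be paired, in any valid partition, with a copy of $\chi^w$ arising from the cosocle. So uniqueness amounts to showing: the multiplicity with which any fixed character $\chi$ occurs among socle-characters equals the number of summands $D_{0,\sigma}^\gamma$ with $\chi(\sigma) = \chi$, and likewise the multiplicity of $\chi^w$ among cosocle-characters matches, so that the pairing is forced block-by-block — or, more carefully, that even when multiplicities exceed one the unordered matching of socle-characters to cosocle-characters is uniquely determined by the constraint "$\chi \leftrightarrow \chi^w$." This reduces to a combinatorial bookkeeping statement about which characters $\chi(\sigma)$, $\chi(\sigma)^w$ can coincide for distinct $\sigma \in \mathcal{D}(\rho)$; the relevant fact is that if $\chi(\sigma) = \chi(\tau)^w$ for $\sigma, \tau \in \mathcal{D}(\rho)$ then in fact $\sigma = \tau^{[w]}$, so the "$w$-twist" pairing of characters lifts to an involution on the index set, and once one accounts for the cases where both $\sigma$ and $\sigma^{[w]}$ lie in $\mathcal{D}(\rho)$ (noted in the remark after Definition~\ref{def:D0}), one checks the pairing is still forced because each such $\sigma$ contributes $\chi(\sigma)$ to \emph{both} socle and cosocle and the count balances. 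I expect the main obstacle to be precisely this last point: verifying, using the explicit list~\eqref{equ:schematic} of the sixteen (per $\underline\delta$) Serre weights and the genericity bounds, that no \emph{accidental} coincidences among the characters $\chi(\sigma)$ create ambiguity in the matching — in other words, that the "collision graph" on characters is a disjoint union of edges (and possibly isolated doubled vertices handled as above) rather than longer paths or cycles. This is the analogue of~\cite[Corollary~13.6]{BP/12} and should follow by the same style of argument, tracking the explicit exponents modulo $q-1 = p^2-1$ and invoking the generic ranges $2e-1 \le r_0 \le p-2$, $2e-2 \le r_1 \le p-3$ to rule out wraparound coincidences.
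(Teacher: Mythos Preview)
Your central claim --- that for each summand $D_{0,\sigma}^\gamma$ the two $B$-eigencharacters on its $U$-invariants are $\chi(\sigma)$ and $\chi(\sigma)^w$ --- is false, and this breaks the entire argument.  The two characters are indeed $\chi(\sigma)$ (from the socle) and $\chi(\kappa(\sigma))$ (from a $U$-invariant generator mapping onto the cosocle), but $\chi(\kappa(\sigma)) \neq \chi(\sigma)^w$ in general.  For instance, if $\sigma = \det^m \otimes (r_0,r_1)$ and $D_{0,\sigma}^\gamma = Q_{\{0\}}(\sigma)$, then $\kappa(\sigma) = \det^{m+r_0+1-p}\otimes(p-2-r_0,\,r_1+1)$, whose $\chi$-character differs from $\chi(\sigma)^w$ unless $r_1 = p-1$, which the genericity bounds exclude.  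So the pair of characters coming from a single summand is \emph{not} of the form $\{\chi,\chi^w\}$, and your partition ``indexed by $\sigma$'' does not satisfy the condition in the lemma.

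The paper's proof takes a different route.  The pairing $\{\chi,\chi^w\}$ runs \emph{across} summands, not within them: one first checks by inspection of the explicit list~\eqref{equ:schematic} that within each block $\widetilde{D}_0(\underline{\delta})$ the four socle characters and the four cosocle characters match up as $\{\chi,\chi^w\}$ pairs (for example, the socle weight with $J=\varnothing$ pairs with the cosocle weight sitting over the $J=\{0\}$ summand).  Disjointness of the $\mathcal{D}(\underline{\delta})$ then gives the result for $\widetilde{D}_0(\rho)$.  Finally, the passage from $\widetilde{D}_0(\rho)$ to $D_0^\gamma(\rho)$ via Definition~\ref{def:D0} merely permutes which cosocle weight sits atop which socle weight; the \emph{multisets} of socle characters and cosocle characters are unchanged, so the same partition works for $D_0^\gamma(\rho)$.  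Your later ``collision graph'' analysis for uniqueness is not needed once the pairing is set up correctly in this way.
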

\begin{proof}
By inspection of~\eqref{equ:schematic}, the claim holds for the $B$-eigencharacters of $\widetilde{D}_0(\underline{\delta})^U$ for every $\underline{\delta} \in \Delta$.  Since the sets $\mathcal{D}(\underline{\delta})$ are disjoint, the claim remains true for the $B$-eigencharacters of $\widetilde{D}_0(\rho)^U$.  Now the sets of $B$-eigencharacters arising from the socle and cosocle of $D_0(\rho)$ are the same as for $\widetilde{D}_0(\rho)$, completing the proof.
\end{proof}
As noted above, if $e > 1$, then $\mathcal{D}(\rho)$ contains pairs $\{ \sigma, \sigma^{[w]} \}$ of Serre weights.  Thus the uniqueness in Lemma~\ref{lem:partition} fails without the requirement that one character of each pair $\{ \chi, \chi^w \}$ come from the socle and one from the cosocle.

View $D_0^\gamma(\rho)$ as a $KZ$-module by inflation, with $\pi \mathrm{Id}_2$ acting trivially.  The partition of Lemma~\ref{lem:partition} gives rise to a family of basic $0$-diagrams $(D_0^\gamma(\rho), \{ \, \})$ in the sense of~\cite[Definition~13.7]{BP/12}, for each choice of Hamiltonian walk $\gamma$.   

\subsection{Irreducible supersingular representations}
It remains to show that the families of diagrams that we have just constructed give rise to irreducible supersingular representations of $G$.  First we show that the families are indecomposable.

\begin{pro} \label{pro:indecomposable}
The family $(D_0(\rho), \{ \, \})$ cannot be written as a direct sum of two non-zero families of diagrams.
\end{pro}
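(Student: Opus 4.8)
The plan is to show that any direct-sum decomposition of the family $(D_0(\rho), \{\,\})$ into families of diagrams must be trivial. Recall that a morphism of diagrams, and hence a decomposition, must be compatible with the $KZ$-action on $D_0^\gamma(\rho)$ and with the pairing on $D_1^\gamma(\rho) = D_0^\gamma(\rho)^{I(1)}$ coming from the partition of Lemma~\ref{lem:partition}; that is, a decomposition $D_0^\gamma(\rho) = E \oplus E'$ as $KZ$-modules must be such that the $\Pi$-action (equivalently, the $N$-action extending the $IZ$-action on $D_1^\gamma(\rho)$) preserves $E^{I(1)}$ and $(E')^{I(1)}$. So the first reduction is: it suffices to show that the only $\Pi$-stable direct-sum decomposition of $D_0^\gamma(\rho)$ as a $KZ$-module, compatible with the partition, is the trivial one.

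Next I would exploit the block structure. As a $\Gamma$-module, $D_0^\gamma(\rho) = \bigoplus_{\sigma \in \mathcal{D}(\rho)} D_{0,\sigma}^\gamma$, and each $D_{0,\sigma}^\gamma$ is an indecomposable (non-split) length-two module with socle $\sigma$ and an irreducible cosocle $\kappa(\sigma)$. Since the $D_{0,\sigma}^\gamma$ are pairwise non-isomorphic as $\Gamma$-modules when their socles differ, and since a priori two may share a socle only when $\{\sigma,\sigma^{[w]}\}\subset\mathcal{D}(\rho)$, any $KZ$-module summand $E$ of $D_0^\gamma(\rho)$ is — up to the ambiguity in those paired blocks — a sub-sum of the $D_{0,\sigma}^\gamma$. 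So a nontrivial decomposition would partition $\mathcal{D}(\rho)$ into two nonempty sets $\mathcal{D}' \sqcup \mathcal{D}''$ (possibly after adjusting for the $\{\sigma,\sigma^{[w]}\}$ blocks) such that the $\Pi$-action on $D_1^\gamma(\rho)$ does not mix the invariants coming from $\mathcal{D}'$ with those coming from $\mathcal{D}''$.

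The heart of the argument, then, is to show that the $\Pi$-action — whose defining property is that no $N$-orbit is contained in $\mathrm{soc}_K D_0^\gamma(\rho)$, as recalled in the overview — connects every block to every other. Concretely: for $\sigma\in\mathcal{D}(\rho)$, the two-dimensional space $(D_{0,\sigma}^\gamma)^{I(1)}$ has one line in the socle and one line in $\kappa(\sigma)^{I(1)}$ (lifting to $(D_{0,\sigma}^\gamma)^{I(1)}$); $\Pi$ sends the non-socle line of $(D_{0,\sigma}^\gamma)^{I(1)}$ to an $I(1)$-invariant vector on which $I$ acts by the $w$-conjugate character, and by the ``no $N$-orbit in the socle'' requirement this image must not lie in the socle of $D_0^\gamma(\rho)$, hence it has nonzero component in $\kappa(\tau)^{I(1)}$ for some $\tau\ne\sigma$ with $\chi(\kappa(\sigma))^w = \chi(\kappa(\tau))$, i.e.\ $\kappa(\sigma)\simeq\kappa(\tau)^{[w]}$. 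By the Lemma preceding Definition~\ref{def:D0}, this happens exactly along $\gamma$-adjacencies, and the switches of Definition~\ref{def:D0} are arranged precisely so that $\Pi$ links $D_{0,\sigma}^\gamma$ and $D_{0,\tau}^\gamma$ whenever $\underline{\delta}_\sigma$ and $\underline{\delta}_\tau$ are $\gamma$-adjacent. Thus the ``mixing graph'' induced by $\Pi$ on the blocks contains the Hamiltonian walk $\gamma$, which is connected and spans all of $\Delta$; within each fixed $\underline{\delta}$, one also checks from~\eqref{equ:schematic} (exactly as in~\cite[\S13]{BP/12}) that $\Pi$ already links the four blocks of $\widetilde{D}_0(\underline{\delta})$ amongst themselves. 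Hence the mixing graph is connected on all of $\mathcal{D}(\rho)$, no nontrivial $\Pi$-stable partition exists, and the family is indecomposable.

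The main obstacle I anticipate is the bookkeeping in the previous paragraph: one must track the $B$-eigencharacters of the relevant $I(1)$-invariant lines through the switches of Definition~\ref{def:D0}, verify that the $\gamma$-adjacency condition there matches the character-matching condition $\kappa(\sigma)\simeq\kappa(\tau)^{[w]}$ of the Lemma, and confirm that the image of $\Pi$ on a non-socle invariant really does have nonzero component outside the socle (rather than merely being allowed to) — this last point is where the defining normalization of the $N$-action, and not just its existence, is used. Once the dictionary between $\gamma$-edges and $\Pi$-links is in place, connectedness of $\gamma$ closes the argument immediately.
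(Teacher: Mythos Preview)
Your high-level strategy --- reduce indecomposability to showing that the $\Pi$-action on $D_1^\gamma(\rho)$ connects all of the blocks $D_{0,\sigma}^\gamma$ --- is exactly what the paper does. The paper packages this as a bijection $\beta:\mathcal{D}(\rho)\to\mathcal{D}(\rho)$, defined by the condition that $\chi(\sigma)^w$ is the cosocle eigencharacter of $D^\gamma_{0,\beta(\sigma)}$, and proves that $\langle\beta\rangle$ acts transitively on $\mathcal{D}(\rho)$.

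However, your description of the linking mechanism is wrong, and the error propagates. By the very construction of the partition in Lemma~\ref{lem:partition}, every pair $\{\chi,\chi^w\}$ consists of one socle character and one cosocle character, and the $N$-action is defined so that $\Pi$ exchanges these partners. Hence $\Pi$ applied to a \emph{non-socle} eigenline always lands in the socle, contrary to what you assert. The ``no $N$-orbit in the socle'' condition forces $\Pi(\text{socle line of }\sigma)$ to land in a cosocle line --- namely that of $D^\gamma_{0,\beta(\sigma)}$ --- not the other way around. Consequently the relevant edge condition is $\sigma^{[w]}\simeq\text{cosoc}(D^\gamma_{0,\tau})$, not $\kappa(\sigma)\simeq\kappa(\tau)^{[w]}$; the Lemma you cite concerns cosocles of $\widetilde{D}_{0,\sigma}$ and explains why the switches of Definition~\ref{def:D0} are available, but it does not describe the $\Pi$-links in $D^\gamma_0(\rho)$.

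This matters for your endgame. Your picture of the mixing graph as ``the Hamiltonian walk $\gamma$ on $\Delta$, plus an internal $4$-cycle inside each $\mathcal{D}(\underline{\delta})$'' is correct for $\widetilde{D}_0(\rho)$ but not for $D^\gamma_0(\rho)$: at an interior vertex of $\gamma$, two of the four blocks in $\mathcal{D}(\underline{\delta})$ have had their cosocles switched, so two of the four internal $\beta$-arrows now point into neighbouring $\mathcal{D}(\underline{\delta}')$'s, and the remaining internal arrows need not connect the four blocks of $\mathcal{D}(\underline{\delta})$ among themselves. So connectedness does not follow from your decomposition of the graph. The paper's proof avoids this by an induction along the walk: at the endpoint $\gamma_1$ only one block is switched, so one checks directly that $\{\tau,\beta(\tau),\beta^2(\tau),\beta^3(\tau)\}=\mathcal{D}(\gamma_1)$; then a short case analysis (depending on the direction of the steps $\gamma_{k-1}\to\gamma_k\to\gamma_{k+1}$) shows that once $\mathcal{D}(\gamma_{k-1})$ lies in the $\beta$-orbit of $\tau$, so does $\mathcal{D}(\gamma_k)$. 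That bookkeeping is unavoidable and is exactly the ``main obstacle'' you anticipated --- but it has to be done with the correct description of $\beta$.
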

\begin{proof}
For every $\sigma \in \mathcal{D}(\rho)$, let $\beta(\sigma) \in \mathcal{D}(\rho)$ be the Serre weight such that $\chi(\sigma)^w$ arises in the cosocle of $D^\gamma_{0,\beta(\sigma)}$.  Note that $\beta(\sigma)$ is well-defined by Lemma~\ref{lem:partition} and that $\beta$ is a bijection of $\mathcal{D}(\rho)$ onto itself.  It suffices to show that the action of $\Z \simeq \langle \beta \rangle$ on the set $\mathcal{D}(\rho)$ is transitive.  Choose a direction of the path $\gamma$, which amounts to fixing a numbering $(\gamma_1, \dots, \gamma_{e^2})$ of the elements of $\Delta$ such that $\gamma_i$ and $\gamma_{i+1}$ are $\gamma$-adjacent for every $1 \leq i \leq e^2 - 1$.  For every $\widetilde{D}_0(\underline{\delta})$, the analogously defined $\langle \beta \rangle$-action is transitive; this follows by inspection of~\eqref{equ:schematic}, observing that $\beta$ permutes the subsets of $\{ 0, 1 \}$ associated to the elements of $\mathcal{D}(\underline{\delta})$ in the order $\varnothing \mapsto \{ 0 \} \mapsto \{ 0, 1 \} \mapsto \{ 1 \} \mapsto \varnothing$. Alternatively, this follows from~\cite[Theorem~15.4]{BP/12}, noting that the pairing $\{ \, \}$ there matches characters arising from the socle with characters arising from the cosocle.  
Since $\gamma_1$ is $\gamma$-adjacent to only one other element of $\Delta$, there is a unique Serre weight $\tau \in \mathcal{D}(\gamma_1)$ such that $D^\gamma_{0,\tau} \not\simeq \widetilde{D}_{0,\tau}$.  The previous observation implies that $\{ \tau, \beta(\tau), \beta^2(\tau), \beta^3(\tau) \} = \mathcal{D}(\gamma_1)$.

We proceed by induction.  Suppose it is known that all elements of $\mathcal{D}(\gamma_{k-1})$ lie in the same orbit as $\tau$.  The same holds for $\mathcal{D}(\gamma_k)$ by an easy but tedious analysis of cases.  For instance, if $\gamma_k = \gamma_{k-1} + (0,1)$ and $\gamma_{k+1} = \gamma_k + (1,0)$, then $\beta(\gamma_{k-1}, \varnothing) = (\gamma_k, \{ 1 \})$ and $\beta(\gamma_k, \{ 1 \}) = (\gamma_k, \varnothing)$ and $\beta(\gamma_k, \varnothing) = (\gamma_k, \{ 0 \})$, whereas 
$\beta^{-1}(\gamma_{k-1}, \{ 0 \}) = (\gamma_k, \{ 0,1 \})$; the other cases, including the case where $\gamma_k$ is the terminal vertex of $\gamma$, are treated similarly.
\end{proof}

\begin{rem} \label{rem:symmetry.break}
The choice of Hamiltonian walk $\gamma$ in the definition of $D_0(\rho)$ really is necessary.  It would be more canonical to start with $\widetilde{D}_0(\rho)$ and switch Serre weights in the cosocle for every pair of adjacent elements of $\Delta$.  However, the family of diagrams obtained in this way is easily checked to be decomposable in general.  
\end{rem}

Observe that if $V$ is any smooth representation of $G$, then the triple $(V^{K(1)}, V^{I(1)}, \mathrm{can})$, where $\mathrm{can} : V^{I(1)} \hookrightarrow V^{K(1)}$ is the natural inclusion, is a diagram.

The following result completes the proof of Theorem~\ref{thm:intro}.  Smooth admissible representations of $G$ satisfying its hypotheses exist by the proof of~\cite[Theorem~19.8]{BP/12}.  

\begin{thm} \label{thm:main}
Let $(D_0(\rho), D_1(\rho), r)$ be a basic $0$-diagram arising from the family constructed above.  Let $V$ be a smooth admissible representation of $G = \mathrm{GL}_2(F)$ satisfying the following conditions:
\begin{enumerate}
\item $\mathrm{soc}_K (V) = \bigoplus_{\sigma \in \mathcal{D}(\rho)} \sigma$.
\item $(D_0(\rho), D_1(\rho), r) \hookrightarrow (V^{K(1)}, V^{I(1)}, \mathrm{can})$.
\item $V$ is generated by $D_0(\rho)$.
\end{enumerate}
Then $V$ is irreducible and supersingular.  
\end{thm}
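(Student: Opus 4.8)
The plan is to follow the Breuil--Pa\v{s}k\={u}nas strategy for proving irreducibility and supersingularity of a representation generated by $D_0$, adapting it to handle the fact that $D_0^\gamma(\rho)$ is not multiplicity-free. First I would show that $V$ is generated over $G$ by any one of its $K$-subrepresentations $D_{0,\sigma}^\gamma$: this is the heart of the matter. Let $W \subseteq V$ be a nonzero $G$-submodule. Since $\mathrm{soc}_K V = \bigoplus_{\sigma \in \mathcal{D}(\rho)} \sigma$ is semisimple and multiplicity-free, $\mathrm{soc}_K W$ is a nonzero sub-sum, so some $\tau \in \mathcal{D}(\rho)$ satisfies $\tau \subseteq \mathrm{soc}_K W$. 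The key claim, to be proved next, is that then $D_{0,\tau}^\gamma \subseteq W$: since $D_{0,\tau}^\gamma$ has length two with socle $\tau$ (this is where $f = 2$ is used), it suffices to produce a single element of $D_{0,\tau}^\gamma$ lying in $W$ but not in $\tau = \mathrm{soc}_K D_{0,\tau}^\gamma$. To do this I would pick $0 \neq v \in \tau^{I(1)}$, note $v \in W^{I(1)} = W \cap D_1^\gamma(\rho)$, and feed $v$ through the $N$-action: by the construction of the $N$-action on $D_1^\gamma(\rho)$, no $N$-orbit lies inside $\mathrm{soc}_K D_0^\gamma(\rho)$, so $\Pi v$ (or some iterate) is a nonzero element of $(D_{0,\sigma}^\gamma)^{I(1)}$ for an appropriate $\sigma \neq \tau$ that is \emph{not} in $\mathrm{soc}_K D_{0,\sigma}^\gamma$. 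Since $\Pi \in N$ normalizes $I(1)$ and $W$ is $G$-stable, $\Pi v \in W$, giving $D_{0,\sigma}^\gamma \subseteq W$ by the length-two argument. Then $\sigma \subseteq \mathrm{soc}_K W$, and I would iterate: using Proposition~\ref{pro:indecomposable}, the ``walk'' $\sigma \mapsto \beta(\sigma)$ or its inverse is transitive on $\mathcal{D}(\rho)$, and tracking the $N$-action around this cycle forces $D_{0,\sigma}^\gamma \subseteq W$ for \emph{every} $\sigma \in \mathcal{D}(\rho)$, hence $D_0^\gamma(\rho) \subseteq W$. By hypothesis~(3), $V$ is generated by $D_0^\gamma(\rho)$, so $W = V$ and $V$ is irreducible.

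For supersingularity, I would argue as in~\cite{BP/12}: suppose $V$ were not supersingular. By the Barthel--Livn\'{e} classification~\cite{BL/94} together with irreducibility, $V$ would then be a subquotient of a principal series or a special series representation; in either case, comparing $K$-socles gives a contradiction. More precisely, if $V$ is a quotient of $\mathrm{ind}_{KZ}^G \sigma / (T - \lambda)$ with $\lambda \neq 0$ for some Serre weight $\sigma$, then by Frobenius reciprocity $\sigma \hookrightarrow V_{|K}$, so $\sigma \in \mathcal{D}(\rho)$; but then a principal-series-type analysis of $\mathrm{ind}_{KZ}^G\sigma/(T-\lambda)$ shows its $K$-socle structure is incompatible with $\mathrm{soc}_K V = \bigoplus_{\sigma' \in \mathcal{D}(\rho)}\sigma'$, because the $N$-action we built interweaves Serre weights lying in $\mathcal{D}(\rho')$ for \emph{distinct} $\rho' \in \mathcal{S}$ and no single $\mathrm{ind}_{KZ}^G\sigma/(T-\lambda)$ can account for all of these compatibly. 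Alternatively, and more cleanly, I would invoke the fact (used implicitly in the overview) that a non-supersingular irreducible quotient would force a $G$-equivariant map from a principal series into which $D_0^\gamma(\rho)$ embeds, and the non-split extensions realized inside $D_0^\gamma(\rho)$ via $\gamma$ cannot all occur inside such a principal series --- this is precisely the obstruction flagged in \S\ref{subsec:overview} that $\mathrm{ind}_{KZ}^G\tau$ does not contain enough non-split extensions of Serre weights factoring through $\Gamma$.

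The main obstacle is the first step: carefully verifying that applying $\Pi$ to $\tau^{I(1)}$ really does land in some $D_{0,\sigma}^\gamma$ outside its socle, and then organizing the induction over $\mathcal{D}(\rho)$ so that \emph{every} summand is captured. This requires a precise bookkeeping of the $N$-action on $D_1^\gamma(\rho)$ relative to the partition of Lemma~\ref{lem:partition} and the bijection $\beta$ of Proposition~\ref{pro:indecomposable}: one must check that as one moves along the $\beta$-cycle, the element produced at each stage is genuinely non-socle (so that the length-two trick applies), which in turn relies on the constraint that no $N$-orbit is contained in $\mathrm{soc}_K D_0^\gamma(\rho)$ and on the $\gamma$-adjacency structure of the $e \times e$ lattice. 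The remaining ingredients --- admissibility of $V$ to control $K$-socles, Frobenius reciprocity, and the Barthel--Livn\'{e} dichotomy --- are standard, and the supersingularity argument, once irreducibility is in hand, parallels~\cite[\S19]{BP/12} closely enough that I expect no new difficulty there.
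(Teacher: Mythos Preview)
Your irreducibility argument is essentially the paper's. You start with $\tau \subseteq \mathrm{soc}_K W$, apply $\Pi$ to $0 \neq v \in \tau^{I(1)}$, and observe that $\Pi v$ generates $D^\gamma_{0,\beta(\tau)}$ (note: not $D^\gamma_{0,\tau}$ --- your initial ``key claim'' is mis-stated, since the element you produce lies in $D^\gamma_{0,\sigma}$ for $\sigma = \beta(\tau) \neq \tau$, but you recover this in the iteration step). The length-two structure then forces $D^\gamma_{0,\beta(\tau)} \subseteq W$, hence $\beta(\tau) \subseteq \mathrm{soc}_K W$, and transitivity of $\langle \beta \rangle$ from Proposition~\ref{pro:indecomposable} gives $D_0(\rho) \subseteq W$. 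The paper phrases this via Frobenius reciprocity and the induced map $\psi : \mathrm{ind}_{KZ}^G \sigma \to W$, but the content is identical.

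For supersingularity you take a genuinely different route, and this is where the proposal is weak. You propose a contradiction argument via the Barthel--Livn\'{e} classification and a comparison of $K$-socles with principal or special series; the sketch is vague (``its $K$-socle structure is incompatible'', ``cannot all occur inside such a principal series'') and would require non-trivial control of $K$-socles of non-supersingular irreducibles over a ramified $F$. The paper instead extracts supersingularity for free from the irreducibility argument already performed. The map $\psi : \mathrm{ind}_{KZ}^G \sigma \to V$ is surjective since $V$ is now known to be irreducible. Restrict $\psi$ to $\langle \alpha \otimes wv \rangle_K$: by Lemma~\ref{lem:ind.structure} this source is $\mathrm{Ind}_I^K \chi(\sigma)^w$, of $K$-length four by Lemma~\ref{lem:socle.filtration}, while its image $\langle \Pi v \rangle_K = D^\gamma_{0,\beta(\sigma)}$ has length two. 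Hence the $K$-socle of the source is killed, and by Lemma~\ref{lem:ind.structure} this socle is exactly $T(\langle \mathrm{Id}_2 \otimes v \rangle_K)$. Thus $\psi$ factors through $\mathrm{ind}_{KZ}^G \sigma / T(\mathrm{ind}_{KZ}^G \sigma)$, which is precisely the definition of supersingularity. This argument reuses the structures already built and avoids any appeal to the classification of non-supersingular representations; your proposal misses this shortcut.
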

\begin{proof}
Let $W \subseteq V$ be a non-zero $G$-submodule, let $\sigma$ be a Serre weight contained in $\mathrm{soc}_K(W)$, and let $0 \neq v \in \sigma^{I(1)}$.  By Frobenius reciprocity, the inclusion $\varphi: \sigma \hookrightarrow W_{|K}$ corresponds to a non-zero map $\psi: \ind_{KZ}^G \sigma \to W$ of $G$-modules with $\psi(\mathrm{id} \tensor v) = \varphi(v)$.  Hence $\psi(\alpha \tensor wv) = \Pi(\varphi(v))$ generates $D^\gamma_{0,\beta(\sigma)}$ and in particular $\beta(\sigma) \subseteq \mathrm{soc}_K(W)$.  Proposition~\ref{pro:indecomposable} ensures that by iterating this procedure we obtain $D^\gamma_{0,\sigma} \subset W$ for all $\sigma \in \mathcal{D}(\rho)$.  Since $D_0(\rho)$ generates $V$, this implies $W = V$.  Thus $V$ is irreducible.

Moreover, the restriction to $\langle \alpha \tensor wv \rangle_K$ of the map $\psi$ above has image $\langle \Pi(\varphi(v)) \rangle_K = D^\gamma_{0,\beta(\sigma)}$, which is a $K$-module of length two by construction.  We have $\langle \alpha \tensor wv \rangle_K \simeq \mathrm{Ind}_I^K \chi(\sigma)^w$ by Lemma~\ref{lem:ind.structure}, which is a $K$-module of length $4$ by Lemma~\ref{lem:socle.filtration}.  Hence $T(\mathrm{id} \tensor v) \in \mathrm{soc}_K(\langle \alpha \tensor wv \rangle_K) \subset \ker \psi$,  and $\psi$ factors through $\ind_{KZ}^G \sigma / T(\ind_{KZ}^G \sigma)$.  Moreover, $\psi$ is surjective since $V$ is irreducible.  Hence $V$ is supersingular.
\end{proof}

We now show that different choices of Hamiltonian walks in the construction of $D_0(\rho)$ produce disjoint families of irreducible supersingular representations.

\begin{pro}
Let $\gamma$ and $\gamma^\prime$ be distinct Hamiltonian walks.
Let $V$ and $V^\prime$ be smooth admissible representations of $G = \mathrm{GL}_2(F)$ satisfying the hypotheses of Theorem~\ref{thm:main} for diagrams arising from the families $(D_0^\gamma(\rho), \{ \, \})$ and $(D_0^{\gamma^\prime}(\rho), \{ \, \})$, respectively.  Then $V$ and $V^\prime$ are not isomorphic.
\end{pro}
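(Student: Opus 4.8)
The plan is to recover the Hamiltonian walk $\gamma$ from the abstract isomorphism class of $V$. Suppose $\Phi \colon V \xrightarrow{\sim} V'$ is an isomorphism of $G$-modules; I will deduce that $\gamma = \gamma'$. First I would restrict $\Phi$ to $K$-socles. Since $\mathrm{soc}_K V = \bigoplus_{\sigma \in \mathcal{D}(\rho)} \sigma = \mathrm{soc}_K V'$ and this direct sum is multiplicity-free as a $K$-module, $\Phi$ must carry the $\sigma$-isotypic component of $\mathrm{soc}_K V$ isomorphically onto the $\sigma$-isotypic component of $\mathrm{soc}_K V'$, for every $\sigma \in \mathcal{D}(\rho)$. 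Passing to $I(1)$-invariants, which are one-dimensional on each $\sigma$, this says that for fixed generators $v_\sigma$ of $\sigma^{I(1)} \subseteq V$ and $v'_\sigma$ of $\sigma^{I(1)} \subseteq V'$ one has $\Phi(v_\sigma) = c_\sigma v'_\sigma$ with $c_\sigma \in \overline{\mathbb{F}}_p^\times$.

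The key step is to identify the $K$-modules $D^\gamma_{0,\tau}$ intrinsically inside $V$. Here I would reuse the computation in the proof of Theorem~\ref{thm:main}, applied with $W = V$ and the tautological inclusion $\sigma \hookrightarrow \mathrm{soc}_K V$: it shows that $\Pi v_\sigma \in V^{I(1)}$ generates the $K$-submodule $\langle \Pi v_\sigma \rangle_K = D^\gamma_{0,\beta^\gamma(\sigma)}$, a $K$-module of length two with socle $\beta^\gamma(\sigma)$, where $\beta^\gamma$ is the bijection of $\mathcal{D}(\rho)$ from the proof of Proposition~\ref{pro:indecomposable}. Since $\Phi$ is $K$-equivariant and $\Phi(\Pi v_\sigma) = \Pi\Phi(v_\sigma) = c_\sigma\, \Pi v'_\sigma$, it restricts to an isomorphism $D^\gamma_{0,\beta^\gamma(\sigma)} \xrightarrow{\sim} D^{\gamma'}_{0,\beta^{\gamma'}(\sigma)}$ of $K$-modules. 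Comparing socles of these two isomorphic modules gives $\beta^\gamma(\sigma) = \beta^{\gamma'}(\sigma)$ for all $\sigma$; writing $\beta$ for this common bijection and letting $\tau = \beta(\sigma)$ range over $\mathcal{D}(\rho)$, I obtain $D^\gamma_{0,\tau} \cong D^{\gamma'}_{0,\tau}$ as $K$-modules for every $\tau \in \mathcal{D}(\rho)$.

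The final step is a bookkeeping argument against Definition~\ref{def:D0} showing these isomorphisms force $\gamma = \gamma'$. Fix $\tau \in \mathcal{D}(\rho)$ with associated pair $((\delta_0,\delta_1), J)$; this pair does not depend on $\gamma$. By Definition~\ref{def:D0}, $D^\gamma_{0,\tau}$ is either $Q_{\{0\}}(\tau)$ or $Q_{\{1\}}(\tau)$, and which of the two it is is governed solely by whether a single specified edge $\mathbf{e}(\tau)$ of the lattice $\Delta$ lies in $\gamma$ --- namely the edge joining $(\delta_0,\delta_1)$ to $(\delta_0,\delta_1+1)$, $(\delta_0,\delta_1-1)$, $(\delta_0+1,\delta_1)$, or $(\delta_0-1,\delta_1)$ according as $J$ is $\{0\}$, $\{1\}$, $\{0,1\}$, or $\varnothing$. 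Since $Q_{\{0\}}(\tau)$ and $Q_{\{1\}}(\tau)$ have distinct cosocles by Lemma~\ref{lem:socle.filtration} and are therefore non-isomorphic, the relation $D^\gamma_{0,\tau} \cong D^{\gamma'}_{0,\tau}$ forces $\mathbf{e}(\tau) \in \gamma$ exactly when $\mathbf{e}(\tau) \in \gamma'$. Finally I would note that every $\underline{\delta} \in \Delta$ occurs as $\underline{\delta}_\tau$ for Serre weights $\tau$ realizing each of the four possible values of $J$, so $\tau \mapsto \mathbf{e}(\tau)$ surjects onto the edge set of the $e \times e$ square lattice $\Delta$ (each edge being detected from either of its endpoints). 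Hence $\gamma$ and $\gamma'$ have the same edge set and therefore coincide. I do not expect a serious obstacle: once Theorem~\ref{thm:main} is available, the only real content is the observation that $D^\gamma_{0,\tau}$ is a canonical $K$-subobject of $V$, and everything after that is routine combinatorics.
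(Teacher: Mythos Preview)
Your argument is correct and follows the same core idea as the paper's proof: for each $\sigma \in \mathcal{D}(\rho)$, the $K$-module $\langle \Pi v_\sigma \rangle_K \subset V$ is exactly $D^\gamma_{0,\beta^\gamma(\sigma)}$, and a $G$-isomorphism $\Phi$ must match these with their counterparts in $V'$, forcing $\beta^\gamma(\sigma) = \beta^{\gamma'}(\sigma)$ by comparing socles.  The paper then finishes in one stroke: since $\gamma \neq \gamma'$, one can choose at the outset an edge $\{\underline{\delta},\underline{\delta}'\}$ lying in $\gamma$ but not in $\gamma'$, and a single $\sigma \in \mathcal{D}(\underline{\delta})$ with $\beta^\gamma(\sigma) \in \mathcal{D}(\underline{\delta}')$; then $\beta^{\gamma'}(\sigma) \notin \mathcal{D}(\underline{\delta}')$, contradicting $\beta^\gamma(\sigma) = \beta^{\gamma'}(\sigma)$.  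Your final step, reconstructing the entire edge set of $\gamma$ from the cosocles of the $D^\gamma_{0,\tau}$ via Definition~\ref{def:D0}, is a correct but slightly longer route to the same conclusion; it amounts to proving that the map $\gamma \mapsto \beta^\gamma$ is injective, which the paper's single well-chosen $\sigma$ establishes more directly.
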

\begin{proof}
There exist $\underline{\delta}, \underline{\delta}^\prime \in \Delta$ which are $\gamma$-adjacent but not $\gamma^\prime$-adjacent.  Let $\beta$ and $\beta^\prime$ be the permutations of $\mathcal{D}(\rho)$ defined in the proof of Proposition~\ref{pro:indecomposable} for the families $(D_0^\gamma(\rho), \{ \, \})$ and $(D_0^{\gamma^\prime}(\rho), \{ \, \})$, respectively.
There is a unique Serre weight $\sigma \in \mathcal{D}(\underline{\delta})$ such that $\beta(\sigma) \in \mathcal{D}(\underline{\delta}^\prime)$.  Let $0 \neq v \in \sigma^{I(1)} \subset (\mathrm{soc}_K V)^{I(1)}$.  As in the proof of Theorem~\ref{thm:main}, we have $\langle \Pi v \rangle_K \simeq D^\gamma_{0, \beta(\sigma)}$.
If there is a $G$-isomorphism $f : V \stackrel{\sim}{\to} V^\prime$, then $f(v) \in \sigma^{I(1)} \subset (\mathrm{soc}_K V^\prime)^{I(1)}$ and thus $D^\gamma_{0, \beta(\sigma)} \simeq \langle \Pi f(v) \rangle_K \simeq D^{\gamma^\prime}_{0, \beta^\prime(\sigma)}$.  Taking $K$-socles, we find that $\beta(\sigma) = \beta^\prime(\sigma)$, but this is impossible since $\underline{\delta}$ is not $\gamma^\prime$-adjacent to $\underline{\delta}^\prime$, and thus $\beta^\prime(\sigma) \not\in \mathcal{D}(\underline{\delta}^\prime)$.
\end{proof}

\begin{exm} \label{exm:example}
We write out $D^\gamma_0(\rho)$ in the case $e = 2, f = 2$, where $\gamma$ is the Hamiltonian walk in $\Delta$ consisting of solid edges in the diagram below.
\begin{equation} \label{equ:ham.path}
{\small {
\begin{diagram}
(0,1) & \rLine & (1,1) \\
\dDots & & \dLine \\
(0,0) & \rLine & (1,0)
\end{diagram}
}} 
\end{equation}
The subscript $n$ of each Serre weight $\tau \in \mathcal{D}(\rho)$ is such that $\tau = \beta^n((0,0),\varnothing)$, illustrating Proposition~\ref{pro:indecomposable}.  The subscripts on the cosocles indicate the matching of Lemma~\ref{lem:partition}.  Here we use the notation of~\eqref{eqn:plus.notation} for $\sigma = \det^m \tensor (r_0,r_1)$, but we drop the subscript $\sigma$.

\begin{longtable}{LRCL}
((0,0),\varnothing) & (r_0,r_1)^+_0 & \textbf{---} \, & (r_0 + 1, p-2-r_1)^-_{15} \\
((0,0), \{ 0 \}) & (r_0 - 1, p-2-r_1)^-_{1} & \textbf{---} \, & (p-1-r_0, p-1-r_1)^-_{0} \\
((0,0), \{ 1 \}) & (p-2-r_0, r_1 + 1)^+_{15} & \textbf{---} \, & (r_0, r_1 + 2)^+_{14} \\
((0,0), \{ 0,1 \}) & (p-1-r_0, p-3-r_1)^-_{14} & \textbf{---} \, & (r_0 - 1, p-2-r_1)^-_{13} \\
((0,1), \varnothing) & (r_0, r_1 - 2)^+_6 & \textbf{---} \, & (r_0 + 1, p-r_1)^-_5 \\
((0,1), \{ 0 \}) & (r_0-1, p-r_1)^-_7 & \textbf{---} \, & (p-1-r_0, p+1-r_1)^-_6 \\
((0,1), \{ 1 \}) & (p-2-r_0, r_1 - 1)^+_5 & \textbf{---} \, & (r_0, r_1)^+_4 \\
((0,1), \{ 0,1 \}) & (p-1-r_0, p-1-r_1)^-_4 & \textbf{---} \, & (r_0 - 1, p - r_1)^-_3 \\
((1,0), \varnothing) & (r_0 - 2, r_1)^+_2 & \textbf{---} \, & (p-r_0, r_1 + 1)^+_1 \\
((1,0), \{ 0 \}) & (r_0 - 3, p-2-r_1)^-_{11} & \textbf{---} \, & (r_0 - 2, r_1)^+_{10} \\
((1,0), \{ 1 \}) & (p-r_0, r_1 + 1)^+_{13} & \textbf{---} \, & (r_0 - 2, r_1 + 2)^+_{12} \\
((1,0), \{ 0,1 \}) & (p+1-r_0, p-3-r_1)^-_{12} & \textbf{---} \, & (p+2-r_0, r_1+1)^+_{11} \\
((1,1), \varnothing) & (r_0 - 2, r_1 - 2)^+_8 & \textbf{---} \, & (p-r_0, r_1 - 1)^+_7 \\
((1,1), \{ 0 \}) & (r_0 - 3, p-r_1)^-_9 & \textbf{---} \, & (p+1-r_0, p+1-r_1)^-_8 \\
((1,1), \{ 1 \}) & (p-r_0, r_1 - 1)^+_3 & \textbf{---} \, & (p+1-r_0, p-1-r_1)^-_2 \\
((1,1), \{ 0, 1 \}) & (p+1-r_0, p-1-r_1)^-_{10} & \textbf{---} \, & (p+2-r_0, r_1 - 1)^+_9
\end{longtable}
\end{exm}

\subsection{Associated $(\varphi, \Gamma)$-modules} \label{sec:phigamma}
Breuil~\cite{Breuil/11} associated \'{e}tale $(\varphi, \Gamma)$-modules for $\Q_p$ to the diagrams of~\cite{BP/12} by adapting the Colmez functor realizing the mod $p$ local Langlands correspondence for $\mathrm{GL}_2(\Q_p)$.  In this section we observe that his construction applies also to the diagrams arising from the families $(D_0^\gamma(\rho), \{ \, \})$ constructed above, for a generic irreducible Galois representation $\rho : G_F \to \mathrm{GL}_2(\overline{\F}_p)$.

Since we assume throughout that $\rho$ is generic, every $\sigma \in \mathcal{D}(\rho)$ is determined by the character $\chi(\sigma)$.  Hence if $0 \neq v \in (\mathrm{soc}_K D_0^\gamma(\rho))^{I(1)}$ is an eigenvector for the action of $I$, then the $K$-submodule generated by $v$ is irreducible.  If $\sigma \in \mathcal{D}(\rho)$ is a twist of $(a_0,a_1)$, set
\begin{align*}
s(\sigma) & =  \begin{cases}
a_0 + 1 &: D_{0,\beta(\sigma)}^\gamma = Q_{\{ 1 \} }(\beta(\sigma)) \\
p(a_1 + 1) &: D_{0,\beta(\sigma)}^\gamma = Q_{\{ 0 \} }(\beta(\sigma))
\end{cases} \\
|s(\sigma)| & =  \begin{cases}
a_0 + 1 &: D_{0,\beta(\sigma)}^\gamma = Q_{\{ 1 \} }(\beta(\sigma)) \\
a_1 + 1 &: D_{0,\beta(\sigma)}^\gamma = Q_{\{ 0 \} }(\beta(\sigma)),
\end{cases}
\end{align*}
where $\beta: \mathcal{D}(\rho) \to \mathcal{D}(\rho)$ is the bijection from the proof of Proposition~\ref{pro:indecomposable}.

\begin{lem} \label{lem:strongly.principal}
Let $D = (D_0, D_1, \iota)$ be a diagram arising from the family $(D_0^\gamma(\rho), \{ \, \})$, and suppose that $0 \neq v \in (\mathrm{soc}_K D_0^\gamma(\rho))^{I(1)}$ is an $I$-eigenvector.  Let $\sigma_v \in \mathcal{D}(\rho)$ be the Serre weight such that $v \in \sigma \subseteq D_0^\gamma(\rho)$.  Then $s = s(\sigma_v)$ is the unique
integer $0 \leq s \leq p^2 - 1$ such that
$$ S_{s}(v) = \sum_{\lambda \in k} \lambda^s \left( \begin{array}{cc} \pi & [\lambda] \\ 0 & 1 \end{array} \right) v = \sum_{\lambda \in k} \lambda^s \left( \begin{array}{cc} [\lambda] & 1 \\ 1 & 0 \end{array} \right) \Pi v$$
is a non-zero element of $(\mathrm{soc}_K D_0^\gamma(\rho))^{I(1)}$.  
\end{lem}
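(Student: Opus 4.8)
The plan is to pin down the $H$-eigencharacter of $S_s(v)$ and that of the relevant space of $I(1)$-invariants, thereby reducing the assertion to a comparison of exponents together with a single non-vanishing statement.

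First, the two displayed expressions for $S_s(v)$ agree tautologically: $\left(\begin{smallmatrix}[\lambda]&1\\1&0\end{smallmatrix}\right)\Pi=\left(\begin{smallmatrix}\pi&[\lambda]\\0&1\end{smallmatrix}\right)$, and since $\Pi v\in D_1$ while $\left(\begin{smallmatrix}[\lambda]&1\\1&0\end{smallmatrix}\right)\in K$ and $\pi\mathrm{Id}_2$ acts trivially, the symbol $\left(\begin{smallmatrix}\pi&[\lambda]\\0&1\end{smallmatrix}\right)v$ means $\left(\begin{smallmatrix}[\lambda]&1\\1&0\end{smallmatrix}\right)(\Pi v)\in D_0^\gamma(\rho)$. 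In particular $S_s(v)\in\langle\Pi v\rangle_K$. As in the proof of Theorem~\ref{thm:main}, $\Pi v$ is an $I$-eigenvector on which $I$ acts through $\chi(\sigma_v)^w$, and $\langle\Pi v\rangle_K=D^\gamma_{0,\beta(\sigma_v)}$; by Definition~\ref{def:D0} the latter is $Q_{\{1\}}(\beta(\sigma_v))$ or $Q_{\{0\}}(\beta(\sigma_v))$ — in either case a $\Gamma$-module of length two with socle $\beta(\sigma_v)$ and cosocle the Serre weight $\sigma_v^{[w]}$ of character $\chi(\sigma_v)^w$.

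Next, for $\tilde h=\diag([h_1],[h_2])$ one has $\tilde h\left(\begin{smallmatrix}\pi&[\lambda]\\0&1\end{smallmatrix}\right)=\left(\begin{smallmatrix}\pi&[h_1h_2^{-1}\lambda]\\0&1\end{smallmatrix}\right)\tilde h$ by multiplicativity of Teichm\"uller lifts, and $\tilde h v=\chi(\sigma_v)(h)\,v$ since $v\in\sigma_v^{I(1)}$; re-indexing the sum by $\mu=h_1h_2^{-1}\lambda$ gives $\tilde h\cdot S_s(v)=\chi(\sigma_v)(h)\,(h_2/h_1)^s\,S_s(v)$. Hence whenever $S_s(v)\neq0$ it is an $H$-eigenvector of character $\mu_s:=\chi(\sigma_v)\cdot\nu^s$, where $\nu$ is the character $\diag(h_1,h_2)\mapsto h_2/h_1$. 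On the other hand $(D^\gamma_{0,\beta(\sigma_v)})^{I(1)}=(D^\gamma_{0,\beta(\sigma_v)})^{U}$ is two-dimensional by Lemma~\ref{lem:two.dim.invariants}; its two $\Gamma$-constituents are distinct Serre weights with all exponents $\le p-1$, so each has one-dimensional $H$-weight spaces, their highest weights $\chi(\beta(\sigma_v))$ and $\chi(\sigma_v^{[w]})=\chi(\sigma_v)^w$ are distinct, and a short exponent count shows $\chi(\beta(\sigma_v))$ is not a weight of $\sigma_v^{[w]}$. Therefore $\beta(\sigma_v)^{I(1)}=(\mathrm{soc}_K D_0^\gamma(\rho))^{I(1)}\cap D^\gamma_{0,\beta(\sigma_v)}$ is precisely the $\chi(\beta(\sigma_v))$-eigenline in $D^\gamma_{0,\beta(\sigma_v)}$; since $S_s(v)$ always lies in $D^\gamma_{0,\beta(\sigma_v)}$, we conclude that $S_s(v)$ is a nonzero element of $(\mathrm{soc}_K D_0^\gamma(\rho))^{I(1)}$ if and only if $S_s(v)\neq0$ and $\mu_s=\chi(\beta(\sigma_v))$. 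To identify the latter $s$, recall from Lemma~\ref{lem:socle.filtration} that the two middle $\Gamma$-constituents of $\Ind_B^\Gamma\chi(\sigma_v)^w$ are $(p-2-a_0,a_1-1)^+_{\sigma_v}$ and $(a_0-1,p-2-a_1)^-_{\sigma_v}$, and that $\beta(\sigma_v)$ is the first of these when $D^\gamma_{0,\beta(\sigma_v)}=Q_{\{1\}}(\beta(\sigma_v))$ (forcing $a_1\ge1$) and the second when $D^\gamma_{0,\beta(\sigma_v)}=Q_{\{0\}}(\beta(\sigma_v))$ (forcing $a_0\ge1$). Computing $\chi(\beta(\sigma_v))$ from this together with the determinant twists of~\eqref{eqn:plus.notation} and comparing with $\mu_s=\chi(\sigma_v)\nu^s$, one finds by a routine manipulation of exponents that the unique $s$ with $0\le s\le p^2-1$ and $\mu_s=\chi(\beta(\sigma_v))$ is $a_0+1$ in the first case and $p(a_1+1)$ in the second, i.e.\ $s=s(\sigma_v)$.

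The remaining point, which I expect to be the real obstacle, is the non-vanishing $S_{s(\sigma_v)}(v)\neq0$. For this I would lift the computation to $\ind_{KZ}^G\sigma_v$: put $\widetilde S_s=\sum_{\lambda\in k}\lambda^s\left(\begin{smallmatrix}\pi&[\lambda]\\0&1\end{smallmatrix}\right)\tensor v=\sum_{\lambda}\lambda^s\left(\begin{smallmatrix}[\lambda]&1\\1&0\end{smallmatrix}\right)(\alpha\tensor wv)$, the second equality being the computation inside the proof of Lemma~\ref{lem:ind.structure}. The summands $\left(\begin{smallmatrix}\pi&[\lambda]\\0&1\end{smallmatrix}\right)\tensor v$ are supported on pairwise distinct $KZ$-cosets, so $\widetilde S_s\neq0$, and the same Teichm\"uller computation shows $\widetilde S_s$ is an $H$-eigenvector of character $\mu_s$. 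By Lemma~\ref{lem:ind.structure}, $\langle\alpha\tensor wv\rangle_K\cong\Ind_I^K\chi(\sigma_v)^w$ carries the length-four Loewy structure of Lemma~\ref{lem:socle.filtration} and surjects as a $K$-module onto $\langle\Pi v\rangle_K=D^\gamma_{0,\beta(\sigma_v)}$ via $\alpha\tensor wv\mapsto\Pi v$, sending $\widetilde S_s$ to $S_s(v)$; since the target has length two with socle $\beta(\sigma_v)$ and cosocle $\sigma_v^{[w]}$, the kernel is the length-two submodule whose $\Gamma$-constituents are $\sigma_v$ and the remaining middle constituent $\kappa$ of $\Ind_B^\Gamma\chi(\sigma_v)^w$. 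A final exponent count, parallel to the previous one and again using $a_1\ge1$ (resp.\ $a_0\ge1$), shows that $\chi(\beta(\sigma_v))$ occurs neither in $\sigma_v$ nor in $\kappa$; hence the nonzero $H$-eigenvector $\widetilde S_{s(\sigma_v)}$, of weight $\mu_{s(\sigma_v)}=\chi(\beta(\sigma_v))$, cannot lie in the kernel, and $S_{s(\sigma_v)}(v)\neq0$. When $a_0=0$ in the $Q_{\{1\}}$ case, or $a_1=0$ in the $Q_{\{0\}}$ case, the constituent $\kappa$ degenerates and $\Ind_B^\Gamma\chi(\sigma_v)^w$ has length three, so the kernel above is just $\sigma_v$ and only the statement about $\sigma_v$ is needed.
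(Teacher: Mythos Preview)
Your argument is correct and, at bottom, is the same computation that the paper delegates to \cite[Lemma~2.7]{BP/12}: that lemma identifies, for each subquotient of $\Ind_B^\Gamma\chi$, the specific operators $S_s$ whose output lands in the $I(1)$-invariants of that subquotient, which is exactly what your $H$-weight comparison and multiplicity-one count establish.  So the approaches coincide; you have simply unpacked the cited reference rather than invoking it.

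Two small remarks on execution.  First, your ``short exponent count'' that $\chi(\beta(\sigma_v))$ is not a weight of $\sigma_v^{[w]}$, of $\sigma_v$, or of $\kappa$ does go through, but it genuinely uses the bounds $a_0,a_1\le p-2$ coming from genericity (e.g.\ if $a_0=p-1$ the count for $\sigma_v$ would fail); you might make this dependence explicit.  Second, the uniqueness of $s$ in the range $0\le s\le p^2-1$ (rather than merely modulo $p^2-1$) also relies on $1\le s(\sigma_v)\le p^2-p$, which again comes out of the same genericity bounds; this is implicit in your write-up but worth stating.
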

\begin{proof}
The claim follows from the structure of $D_0^\gamma(\rho)$ and~\cite[Lemma~2.7]{BP/12}.
\end{proof}

It is immediate from Lemma~\ref{lem:strongly.principal} that the diagram $D$ is strongly principal in the sense of~\cite[D\'{e}finition~4.3]{Breuil/11}.  Thus the construction of~\cite[Lemme~4.5]{Breuil/11}, whose details we do not recall here, provides a $(\varphi, \Gamma)$-module $M(D)$ over $\Q_p$.  
More precisely, consider the following diagram
$D^\prime = (D_0^\prime, D_1^\prime, \iota^\prime)$ for $F_0 = \Q_{p^2}$: here $D_0^\prime$ is the $\Gamma$-module $D_0$, viewed as a module over $K_0 = \GL_2(\mathcal{O}_F)$ by inflation.  Let $I_0(1) \leq K_0$ be the pro-$p$-Iwahori subgroup, and let $D_1^\prime = D_0^U = (D^\prime_0)^{I_0(1)}$, with $\left( \begin{array}{cc} 0 & 1 \\ p & 0 \end{array} \right)$ acting in the same way that $\Pi$ acts on $D_1$.
Now define $M(D)$ to be the $(\varphi, \Gamma)$ module associated to $D^\prime$ in~\cite{Breuil/11}.

The representation $V(M(D))$ of $G_{\Q_p}$ corresponding to $M(D)$ has dimension $| \mathcal{D}(\rho)| = 4e^2$ and is described by~\cite[Proposition~4.7]{Breuil/11}.  For every $d \in \N$, let $\Q_{p^{d}} / \Q_p$ be the unramified extension of degree $d$, and let $\nu_d : G_{\Q_{p^d}} \to \overline{\F}_p^\times$ be a fundamental character of level $d$ given by $\nu_d(g) = \frac{g(\sqrt[p^d - 1]{-p})}{\sqrt[p^d - 1]{-p}} \in \F_{p^d}^\times \hookrightarrow \overline{\F}_p^\times$.  The claims below are independent of the choice of embedding $\F_{p^d} \hookrightarrow \overline{\F}_p$.

\begin{pro} \label{pro:explicit.phi.gamma}
Let $D$ be a diagram arising from the family $(D_0^\gamma(\rho), \{ \, \})$.  Let $\sigma = \det^c \tensor (a_0, a_1)$ be any element of $\mathcal{D}(\rho)$.  Then
$$ V(M(D)) \simeq \mathrm{Ind}^{G_{\Q_p}}_{G_{\Q_{p^{4e^2}}}} ( \nu_{4e^2}^A \tensor_{\overline{\F}_p} \kappa) \tensor \nu_1^{-(c+a_0 + pa_1 + 2)},$$
where $A = \frac{1}{p-1} \sum_{i = 0}^{4e^2 - 1} p^{4e^2 - 1 - i} |s(\beta^i(\sigma))|$ and $\kappa$ is an unramified character of $G_{\Q_{p^{4e^2}}}$. 
\end{pro}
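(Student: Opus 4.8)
The plan is to invoke the explicit formula of Breuil~\cite[Proposition~4.7]{Breuil/11} directly, since Lemma~\ref{lem:strongly.principal} has already verified that the diagram $D^\prime$ over $F_0 = \Q_{p^2}$ is strongly principal. The starting point is the bijection $\beta : \mathcal{D}(\rho) \to \mathcal{D}(\rho)$ from Proposition~\ref{pro:indecomposable}; by that proposition $\beta$ acts transitively, so $\mathcal{D}(\rho) = \{ \sigma, \beta(\sigma), \beta^2(\sigma), \dots, \beta^{4e^2 - 1}(\sigma) \}$ for any fixed $\sigma$, and the cyclic orbit structure is exactly what is needed to apply Breuil's formula, which expresses $V(M(D^\prime))$ as an induction from $G_{F_0^{\mathrm{nr}}\text{-by-a-finite-cyclic-quotient}}$ governed by the ``segment exponents'' $s(\beta^i(\sigma))$ attached to consecutive weights along a $\Pi$-orbit in $D_1^\prime = D_0^U$.

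First I would set up notation: fix $\sigma = \det^c \tensor (a_0, a_1) \in \mathcal{D}(\rho)$ and a nonzero $I$-eigenvector $v \in \sigma^{I(1)}$, so that by Lemma~\ref{lem:strongly.principal} the operator $S_{s(\sigma_v)}$ sends $v$ to a nonzero $I$-eigenvector lying in $\beta(\sigma)$; iterating along the orbit gives a chain of $I$-eigenvectors $v = v_0, v_1, \dots, v_{4e^2 - 1}$ in the successive weights $\beta^i(\sigma)$, together with the exponents $s_i = s(\beta^i(\sigma))$. Breuil's construction produces $M(D^\prime)$ as a free $\Fp((X))$-module (or the appropriate mod-$p$ analogue) of rank $|\mathcal{D}(\rho)| = 4e^2$ with an explicit $\varphi$ built from these $S_{s_i}$'s, and~\cite[Proposition~4.7]{Breuil/11} identifies the associated Galois representation. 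The core computation is then bookkeeping: the product of the ``$\varphi$-matrices'' around the full cycle of length $4e^2$ has a determinant-type invariant whose exponent, after the standard $\frac{1}{p-1}$-normalization coming from passing from a fundamental character of level $1$ to level $4e^2$, is precisely $A = \frac{1}{p-1} \sum_{i=0}^{4e^2-1} p^{4e^2-1-i} |s(\beta^i(\sigma))|$. The residual unramified ambiguity in Breuil's formula is packaged into the character $\kappa$, and the overall twist by $\nu_1^{-(c + a_0 + p a_1 + 2)}$ comes from tracking the central character: $\pi \mathrm{Id}_2$ acts trivially on $D_0^\gamma(\rho)$, which forces the determinant of $V(M(D))$ to match $\omega^{-(c + a_0 + p a_1 + 2)}$ up to unramified twist, exactly as in~\cite[\S4]{Breuil/11}.

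There is one genuine subtlety to check rather than merely cite. In~\cite{BP/12} the module $D_0$ is multiplicity-free, whereas here, as the Remark after Definition~\ref{def:D0} emphasizes, $D_0^\gamma(\rho)$ need not be: a pair $\{\sigma, \sigma^{[w]}\}$ can occur with each appearing in both socle and cosocle. One must verify that Breuil's functor is nonetheless well-defined, i.e.\ that the formula in~\cite[Lemme~4.5]{Breuil/11} depends only on the $I$-eigencharacter data and the $\Pi$-action, not on a choice of splitting of $\mathrm{soc}_K D_0^\gamma(\rho)$ into distinct isotypic lines. Since $\rho$ is generic, every $\sigma \in \mathcal{D}(\rho)$ is determined by $\chi(\sigma)$ (as noted just before Lemma~\ref{lem:strongly.principal}), so the eigenspaces in $(\mathrm{soc}_K D_0^\gamma(\rho))^{I(1)}$ are one-dimensional and the orbit of $\Pi$ on them is well-defined; this is what rescues the construction. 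I would spell this out and then observe that, with this point settled, the computation of $V(M(D))$ reduces verbatim to Breuil's, the only change being that the length of the $\Pi$-orbit is $4e^2$ rather than $2^f$ and the exponents $|s(\beta^i(\sigma))|$ are read off from Definition~\ref{def:D0} via the case distinction in the definition of $s(\sigma)$.

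The main obstacle I anticipate is bookkeeping the exponent $A$ correctly, in particular matching the indexing convention (which power of $p$ attaches to which $|s(\beta^i(\sigma))|$) between Breuil's normalization and ours, and confirming that the $\gamma$-dependence enters only through the \emph{order} in which the exponents $|s(\beta^i(\sigma))|$ appear in the sum $A$ (so that different Hamiltonian walks genuinely give different $A$ in general, consistent with \S\ref{sec:Langlands}). Everything else is an application of~\cite[Proposition~4.7]{Breuil/11} once the well-definedness point above is dispatched.
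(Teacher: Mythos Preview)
Your proposal is correct and follows the same approach as the paper: both simply invoke \cite[Proposition~4.7]{Breuil/11}, with the paper's proof being the one-liner ``Immediate from~\cite[Proposition~4.7]{Breuil/11}.''  Your elaboration on the orbit structure, the well-definedness issue in the non-multiplicity-free setting (resolved via genericity, exactly as the paper notes just before Lemma~\ref{lem:strongly.principal}), and the exponent bookkeeping is all sound and merely fills in details the paper leaves implicit.
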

\begin{proof}
Immediate from~\cite[Proposition~4.7]{Breuil/11}.
Note that $\kappa$ may be made explicit.
\end{proof}

One might hope to have $V(M(D))_{| I_{F_0}} \simeq \left( \mathrm{Ind}^{\otimes G_{\Q_p}}_{G_{F_0}} ( \mathrm{Ind}^{G_{F_0}}_{G_F} \rho^\vee ) \right)_{| I_{F_0}}$; indeed, in the case where $F = F_0$ is an unramified extension of $\Q_p$, this was proved by Breuil~\cite[Corollaire~5.4]{Breuil/11}.  However, 
this does not hold in general.  
Applying Proposition~\ref{pro:explicit.phi.gamma} to the diagrams arising from Example~\ref{exm:example} and taking $\sigma = \det^m \tensor (r_0, r_1) \in \mathcal{D}(\rho)$ as in that example, where $r_0, r_1$ are determined by $\rho$ as in~\eqref{equ:form}, we obtain
\begin{multline*}
A = p^{15}(r_0 + p - 2) + p^{14}(r_0 + p - 3) + p^{13}(r_1 + p - 2) + p^{12}(2p - 2 - r_0) + p^{11}(2p - 3 - r_0) + \\  p^{10}(r_1 + p - 3) + p^9 (r_0 + p - 2) + p^8(r_0 + p - 3) + p^7(r_0 + p - 4) + \\ p^6(2p - 2 - r_1) +  p^5 (2p - 3 - r_1) + p^4 (2p - 4 - r_1) + p^3 (2p - 1 - r_0) + \\ p^2 (2p - 2 - r_0) + p (2p - 3 - r_0) + (r_1 + p - 1) = (p+1)(p^2 + 1)B,
\end{multline*}
where $B$ is an irreducible polynomial in the variables $p, r_0, r_1$ of degree $13$ in $p$.  
It is easy to check that $\mathrm{Ind}_{G_F}^{G_{F_0}} \rho^\vee$ is a direct sum of two two-dimensional representations.  Thus $(\mathrm{Ind}^{\otimes G_{\Q_p}}_{G_{F_0}} (\mathrm{Ind}_{G_F}^{G_{F_0}} \rho^\vee))_{I_{F_0}}$ is a sum of characters of level at most $4$ and cannot be isomorphic to $V(M(D))_{I_{F_0}}$.
One checks similarly that the same holds for the other three choices of Hamiltonian paths in~\eqref{equ:ham.path}.

\begin{acknowledgements} 
The author is grateful to Christophe Breuil, Toby Gee, Eknath Ghate, Jesper Lykke Jacobsen, Nathan Keller, Daniel Le, Stefano Morra, and Mihir Sheth for helpful correspondence and conversations and for comments on an earlier version of this work, and to the anonymous referee for a careful reading of the text and for suggestions that have improved the exposition.
\end{acknowledgements}

\bibliographystyle{amsplain}

\providecommand{\bysame}{\leavevmode\hbox to3em{\hrulefill}\thinspace}
\providecommand{\MR}{\relax\ifhmode\unskip\space\fi MR }

\providecommand{\MRhref}[2]{%
  \href{http://www.ams.org/mathscinet-getitem?mr=#1}{#2}
}
\providecommand{\href}[2]{#2}

\end{document}